\newtheorem{theorem}{Theorem}[section]
\newtheorem{lemma}[theorem]{Lemma}
\newtheorem{corollar}[theorem]{Corollary}
\newtheorem{prop}[theorem]{Proposition}
\theoremstyle{remark}
\theoremstyle{definition}
\newtheorem{example}[theorem]{Example}
\newtheorem{defi}[theorem]{Definition}
\newcommand{\indi}{\mathds{1}} 
\newcommand{\ind}{\perp \! \! \! \perp}
\newcommand{\td}{\mathbb{T}^d}
\newcommand{\T}{\mathbb{T}}
\newcommand{\D}{\mathbb{D}}
\providecommand{\abs}[1]{\lvert#1\rvert}
\providecommand{\norm}[1]{\lVert#1\rVert}
\newcommand{\tbf}{\textbf{t}}
\newcommand{\sbf}{\textbf{s}}
\newcommand{\nbf}{\textbf{n}}
\newcommand{\hbf}{\textbf{h}}
\newcommand{\kbf}{\textbf{k}}
\newcommand{\mbf}{\textbf{m}}
\newcommand{\Bbf}{\textbf{B}}
\newcommand{\Nbf}{\textbf{N}}
\newcommand{\zbf}{\textbf{z}}
\newcommand{\ew}{\mathbb{E}}
\newcommand{\var}{\mbox{\textbf{Var}}}
\newcommand{\F}{\mathcal{F}} 
\newcommand{\R}{\mathds{R}}
\newcommand{\PP}{\mathbb{P}}
\newcommand{\N}{\mathds{N}}
\newcommand{\C}{\mathbb{C}}
\newcommand{\Z}{\mathbb{Z}}
\newcommand{\bs}{\begin{sffamily}}
\newcommand{\es}{\end{sffamily}}
\newcommand\be{\begin{eqnarray}}
\newcommand\ee{\end{eqnarray}}
\newcommand\bee{\begin{eqnarray*}}
\newcommand\eee{\end{eqnarray*}}
\newcommand\bi{\begin{itemize}}
\newcommand\ei{\end{itemize}}
\renewenvironment{proof}{\vspace{0.1ex}\noindent{\it Proof.}\hspace{0.1em}}
	{\hfill$\Box$}
\begin{document}
 \pagenumbering{arabic}

\title{Strictly stationary solutions of spatial ARMA equations}
\author{Martin Drapatz\footnote{Institut für Mathematische Stochastik, TU Braunschweig, Pockelsstraße 14, D-38106 Braunschweig, Germany m.drapatz@tu-bs.de}}
\maketitle


\begin{abstract}
The  generalization of the ARMA time series model to the multidimensional index set $\Z^d$, $d\ge2$, is called spatial ARMA model. The purpose of the following is to specify necessary conditions and sufficient conditions for the existence of strictly stationary solutions of the ARMA equations when the driving noise is i.i.d. Two different classes of strictly stationary solutions are studied, solutions of causal and non-causal models. For the special case of a first order model on $\Z^2$ conditions are obtained, which are simultaneously necessary and sufficient.
\end{abstract}
{\bfseries Keywords:} causality, random fields, spatial ARMA model, strict stationarity. 
\section{Introduction}

Let $d\in\N$, usually $d>1$, and $(Y_\tbf)_{\tbf=(t_1,\ldots,t_d)\in\Z^d}$ a $d$-dimensional complex-valued random field living on a probability space $(\Omega,\F,\PP)$. If $(Y_\tbf)_{\tbf\in\Z^d}$ fulfills the equations
\begin{eqnarray}
Y_{\textbf{t}} - \sum_{\textbf{n}\in R} \phi_{\textbf{n}}Y_{\textbf{t}-\textbf{n}} = Z_{\textbf{t}} + \sum_{\textbf{n}\in S} \theta_{\textbf{n}} Z_{\textbf{t}-\textbf{n}},\quad \tbf\in\Z^d,\label{eq1}
\end{eqnarray}
where $(\phi_\nbf)_{\nbf\in R},(\theta_\nbf)_{\nbf\in S}\subset \C$, $R$ and $S$ are finite subsets of $\N_0^d\backslash \{\boldsymbol 0\}$ or more generally of $\Z^d\backslash \{\boldsymbol 0\}$, and $(Z_\tbf)_{\tbf\in\Z^d}$ is an i.i.d. complex-valued random field on $(\Omega,\F,\PP)$, we call $(Y_\tbf)_{\tbf\in\Z^d}$ an \emph{ARMA random field}, where ARMA is short for autoregressive moving average. The spatial ARMA model defined by \eqref{eq1} is a natural generalization of the well-known ARMA time series model (see e.g. Brockwell and Davis \cite{Brockwelldavis}, Chapter 3) to higher dimensional index sets $\Z^d$, $d>1$. The spatial ARMA model was considered long ago by Whittle \cite{Whittle} and many others (e.g. \cite{Tjostheim}, \cite{Besag}, \cite{Basu}) had been working on this topic. However, most work has been spent on weakly stationary solutions of the spatial ARMA model and their statistics. \\
\indent For the time series model ($d=1$), Brockwell and Lindner \cite{Brockwelllindner} obtained necessary and sufficient conditions for the existence of strictly stationary solutions of \eqref{eq1}. In this article we generalize those results and obtain some necessary and some sufficient conditions for the existence of strictly stationary solutions of \eqref{eq1}, in terms of some moment conditions on the white noise $(Z_\tbf)_{\tbf\in\Z^d}$ and zero sets of the characteristic polynomials 
\begin{eqnarray*}
\Phi(\zbf)&=&1- \sum_{\textbf{n}\in R}\phi_\nbf \zbf^{\nbf}, \quad \text{and}\\
\Theta(\zbf)&=&1+ \sum_{\textbf{n}\in S}\theta_\nbf \zbf^{\nbf},\quad \zbf=(z_1,\ldots,z_d)\in\C^d,
\end{eqnarray*}
corresponding to the recurrence equation \eqref{eq1}. The polynomial $\Phi$ is called \emph{autoregressive polynomial} and $\Theta$ \emph{moving average polynomial} (we speak of polynomials even if $R,S\subset \Z^d$). It is known that a sufficient condition for the existence of a weakly stationary solution, when usually $(Z_\tbf)_{\tbf\in\Z^d}$ is considered to be only uncorrelated white noise with mean zero, is given by (see Rosenblatt \cite{Rosenblatt}, page 60)
\begin{eqnarray}
\int_{\td}\left|\frac{\Theta(e^{-i\tbf})}{\Phi(e^{-i\tbf})}\right|^2 d\lambda^d(\tbf)<\infty\label{eq2},
\end{eqnarray}
where $\td$ is the $d$-fold cartesian product of the factor space $\mathbb{T}=\R/2\pi\Z$, which we identify by $(-\pi,\pi]$, and $\lambda^d$ is the Lebesgue measure on $\R^d$ limited to $\td$. By spectral density arguments it can easily be shown that this condition is also necessary. Condition \eqref{eq2} will also play a decisive role, when strictly stationary solutions are considered. 
\par There are several differences between $d=1$ and higher dimensional models with $d>1$, which bring some difficulties: first of all, polynomials can not be factored completely like in one dimension, which implies that a quotient of polynomials in several variables may have common zeros that cannot be canceled out. Another difference is that even though $\Phi(e^{-i\cdot})$ may have zeros on $\td$, it is possible that \eqref{eq2} holds, even if $\Theta(\zbf)\equiv 1$. Furthermore, we have to deal with multiple sums $\sum_{\kbf\in\N_0^d} X_\kbf$ for some random field $(X_\kbf)_{\kbf\in\Z^d}$, which do not necessarily converge absolutely. Therefore a type of convergence defined by Klesov \cite{Klesov}, namely almost sure convergence in the rectangular sense, will be used.  \\
\indent The article is structured as follows: in Section 2 we study linear strictly stationary solutions. After that in Section 3 we go on considering strictly stationary causal solutions, without assuming them a priori to be linear. Then in Section 4 a full characterization of necessary and sufficient conditions for the existence of strictly stationary causal solutions of a first order autoregressive model in dimension two will be given. \\
The following notation will be used: vector-valued variables will be printed bold and the multi-index notation 
$$\zbf^\nbf=z_1^{n_1}\cdots z_d^{n_d},\quad \zbf=(z_1,\ldots, z_d)\in\C^d,\ \nbf=(n_1,\ldots, n_d)\in\Z^d, \ e^{i\tbf}=(e^{it_1},\ldots,e^{it_d}),\ \tbf\in\td,$$
will be applied. To indicate that two random variables $X$ and $Y$ are independent, the symbol $X \ind Y$ will be used. Furthermore the Backward Shift Operator $\Bbf=(B_1,\ldots, B_d)$, where $B_i$ shifts the $i$th coordinate back by one, i.e. for the $i$th unit vector $e_i$ in $\R^d$ we have
$$B_i Z_\tbf= Z_{\tbf-e_i},\quad i=1,\ldots,d,\quad \tbf\in\Z^d,$$
will be used to write the ARMA equation \eqref{eq1} in a compact form as
$$\Phi(\Bbf)Y_\tbf=\Theta(\Bbf)Z_\tbf,\quad \tbf\in\Z^d.$$
The Hilbert space of functions $f:\td\to\C$, which are square integrable with respect to $\lambda^d$, will be denoted by $L^2(\td)$. If condition \eqref{eq2} is fulfilled, the existence of a Fourier expansion
\begin{equation}
\frac{\Theta(e^{-i\tbf})}{\Phi(e^{-i\tbf})}=\sum_{\kbf\in\Z^d}\psi_\kbf e^{-i\kbf\tbf},\quad (\psi_\kbf)_{\kbf\in\Z^d}\subset\C,\quad \tbf\in\td,\label{end1}
\end{equation}
where $\kbf\tbf=\kbf\cdot\tbf=\sum_{i=1}^d k_i t_i$ denotes the Euclidean inner product on $\R^d$, is assured, see Shapiro \cite{Shapiro}, Theorem 2.2. Plugging \eqref{end1} into \eqref{eq2}, it is easy to see that the coefficients $(\psi_\kbf)_{\kbf\in\Z^d}$ are square summable, i.e. $\sum_{\kbf\in\Z^d}\abs{\psi_\kbf}^2<\infty$. By $H^2$ we denote the Banach space containing all functions $f:\D^d\to\C$ holomorphic on the open unit polydisc $\D^d=\{\zbf=(z_1,\ldots,z_d)\in\C^d: \abs{z_i}<1, \ i=1,\ldots,d\}$ and fulfilling
$$\norm{f}^2_{H^2}:=\sup_{0\le r<1} \frac{1}{(2\pi)^d}\int_{\td}\left|f(r e^{-i\tbf})\right|^2 d\lambda^d(\tbf)<\infty.$$

If a function $f:\D^d\to\C$ is holomorphic, it admits a power series expansion $f(\zbf)=\sum_{\kbf\in\N_0^d}a_\kbf \zbf^\kbf$, see Range \cite{Range}, Theorem 1.6. Thus, a function $f:\D^d\to\C\in H^2$ admits a representation $f(\zbf)=\sum_{\kbf\in\N_0^d}a_\kbf \zbf^\kbf$ and  $f\in H^2$, if and only if $\sum_{\kbf\in\N_0^d}\abs{a_\kbf}^2<\infty$. To see that, notice that
$$\norm{f}^2_{H^2}=\sup_{0\le r<1}\sum_{\kbf\in\N_0^d}\abs{a_\kbf}^2r^{2\abs{\kbf}}.$$
Hence, each function $f\in H^2$ can be identified with its \glqq boundary function\grqq \ $g:\td\to\C$, whose Fourier expansion is given by $g(e^{-i\tbf})=\sum_{\kbf\in\N_0^d} a_\kbf e^{-i\kbf\tbf}$. Further
$$\norm{f}^2_{H^2}=\sup_{0\le r<1} \sum_{\kbf\in\N_0^d}\abs{a_\kbf}^2r^{2\abs{\kbf}}=\sum_{\kbf\in\N_0^d}\abs{a_\kbf}^2=\frac{1}{(2\pi)^d} \int_{\td} \abs{g(e^{-i\tbf})}^2 d\lambda^d(\tbf)=:\norm{g}^2_{L^2(\td)},$$
so that $H^2$ can be identified with a closed subspace of $L^2(\td)$, more precisely, the space of all functions $g\in L^2(\td)$, whose Fourier coefficients $(a_\kbf)_{\kbf\in\Z^d}$ vanish for $\kbf\in\Z^d\backslash \N_0^d$. The space $H^2$ is called {\it Hardy space}. For more details about Fourier Analysis and Hardy spaces in several variables see Shapiro \cite{Shapiro} or Rudin \cite{Rudin}. Beside Fourier expansions, Laurent expansions in several variables will be utilized. All results from function theory in several variables used in this work can be found in Range \cite{Range}. 


\section{Linear Strictly Stationary Solutions}
\setcounter{equation}{0}
In this section we introduce the notion of {\it linear strictly stationary ARMA random fields} and establish necessary and sufficient conditions for the existence of solutions of the ARMA equations for this class of random fields. In the whole section we assume that $R$ and $S$ are subsets of $\Z^d\backslash \{\textbf{0}\}$.
\begin{defi} A random field $(Y_\tbf)_{\tbf\in\Z^d}$, which solves the ARMA equation \eqref{eq1} where $(Z_\tbf)_{\tbf\in\Z^d}$ is an i.i.d. noise, is called {\it linear strictly stationary solution}, if there are coefficients $(\psi_\kbf )_{\kbf\in\Z^d}\subset\C$, such that
\begin{equation*}
Y_\tbf=\sum_{\kbf\in\Z^d} \psi_\kbf Z_{\tbf-\kbf},\quad \tbf\in\Z^d,
\end{equation*}
where the right-hand side converges almost surely absolutely.
\end{defi}
Obviously, a linear strictly stationary solution is indeed strictly stationary.

\begin{theorem}
Let $R$ and $S$ be subsets of $\Z^d\backslash\{\boldsymbol 0\}$ and $(Z_\tbf)_{\tbf\in\Z^d}$ an i.i.d. random field. The ARMA equation \eqref{eq1} admits a linear strictly stationary solution if and only if 
\begin{eqnarray*}
&&\frac{\Theta(e^{-i\cdot})}{\Phi(e^{-i\cdot})}\in L^2(\td),
\end{eqnarray*}
and if
\begin{equation}
Y_\tbf=\sum_{\kbf\in\Z^d}\psi_\kbf Z_{\tbf-\kbf},\quad \tbf\in\Z^d,\label{eq3a}
\end{equation}
converges almost surely absolutely, where
\begin{equation}
\frac{\Theta(e^{-i\tbf})}{\Phi(e^{-i\tbf})}=\sum_{\kbf\in\Z^d}\psi_\kbf e^{-i\kbf\tbf},\quad \tbf\in\td,\label{eq3b}
\end{equation}
denotes the Fourier expansion of $\Theta(e^{-i\cdot})/\Phi(e^{-i\cdot})$. If these two conditions are satisfied, then a linear strictly stationary solution is given by (\ref{eq3a}).\bigbreak \label{thm1}
\end{theorem}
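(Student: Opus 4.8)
```latex
The plan is to prove both implications, with the ``if'' direction being essentially constructive and the ``only if'' direction requiring more care.

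\textbf{The ``if'' direction.} Assume $\Theta(e^{-i\cdot})/\Phi(e^{-i\cdot})\in L^2(\td)$ and that the series \eqref{eq3a} converges almost surely absolutely. Then $(Y_\tbf)_{\tbf\in\Z^d}$ is a well-defined random field, and it is linear strictly stationary by definition; it remains only to verify that it actually solves \eqref{eq1}. The idea is to apply $\Phi(\Bbf)$ formally to \eqref{eq3a} and show the result equals $\Theta(\Bbf)Z_\tbf$. Since absolute convergence lets me rearrange the (finitely many) shifts coming from $\Phi(\Bbf)$ term by term, I would compute
\begin{equation*}
\Phi(\Bbf)Y_\tbf=\sum_{\kbf\in\Z^d}\Big(\psi_\kbf-\sum_{\nbf\in R}\phi_\nbf\psi_{\kbf-\nbf}\Big)Z_{\tbf-\kbf},
\end{equation*}
and the coefficient in parentheses is exactly the $\kbf$-th Fourier coefficient of $\Phi(e^{-i\tbf})\cdot\big(\Theta(e^{-i\tbf})/\Phi(e^{-i\tbf})\big)=\Theta(e^{-i\tbf})$ by \eqref{eq3b}, i.e.\ it equals $\theta_\kbf$ (with $\theta_{\boldsymbol 0}=1$) and vanishes off $S\cup\{\boldsymbol 0\}$. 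Hence $\Phi(\Bbf)Y_\tbf=\Theta(\Bbf)Z_\tbf$, which is \eqref{eq1}.

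\textbf{The ``only if'' direction.} Suppose \eqref{eq1} admits a linear strictly stationary solution $Y_\tbf=\sum_{\kbf}c_\kbf Z_{\tbf-\kbf}$ for some coefficients $(c_\kbf)$, the series converging a.s.\ absolutely. Plugging this representation into the ARMA equation and again rearranging under absolute convergence, I obtain on both sides a linear combination of the i.i.d.\ variables $(Z_\jbf)$. The key probabilistic step is that, because the $Z_\jbf$ are i.i.d.\ and nondegenerate, two a.s.-convergent linear combinations $\sum a_\jbf Z_\jbf=\sum b_\jbf Z_\jbf$ can hold only if $a_\jbf=b_\jbf$ for every $\jbf$ (a uniqueness-of-coefficients argument; I would invoke independence to match coefficients, e.g.\ via characteristic functions or by isolating a single index). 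Matching coefficients of $Z_{\tbf-\kbf}$ forces $c_\kbf-\sum_{\nbf\in R}\phi_\nbf c_{\kbf-\nbf}=\theta_\kbf$ for all $\kbf$, i.e.\ the $(c_\kbf)$ satisfy the same recurrence as the $(\psi_\kbf)$.

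\textbf{The main obstacle} lies in concluding from this recurrence that $(c_\kbf)=(\psi_\kbf)$ and that $\Theta(e^{-i\cdot})/\Phi(e^{-i\cdot})\in L^2(\td)$. Absolute a.s.\ convergence of $\sum c_\kbf Z_{\tbf-\kbf}$ forces the deterministic coefficients to be square summable, $\sum_\kbf\abs{c_\kbf}^2<\infty$ (this follows from nondegeneracy of $Z$ together with a Kolmogorov-type three-series or second-moment truncation argument, since a.s.\ convergence of a sum of independent terms constrains their variances or truncated variances). Thus $g(e^{-i\tbf}):=\sum_\kbf c_\kbf e^{-i\kbf\tbf}$ defines an element of $L^2(\td)$. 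Its Fourier coefficients satisfy the recurrence above, which is precisely the statement that $\Phi(e^{-i\tbf})g(e^{-i\tbf})=\Theta(e^{-i\tbf})$ in $L^2(\td)$; dividing (justified off the zero set of $\Phi(e^{-i\cdot})$, which has Lebesgue measure zero or else $g$ would not be in $L^2$) yields $g=\Theta(e^{-i\cdot})/\Phi(e^{-i\cdot})\in L^2(\td)$ and hence $c_\kbf=\psi_\kbf$. The delicate point to handle carefully is the interplay between the zero set of $\Phi(e^{-i\cdot})$ on $\td$ and the $L^2$ membership, since in dimension $d\ge2$ one cannot cancel common factors of $\Phi$ and $\Theta$; I would argue that square summability of the coefficients is exactly what rescues condition \eqref{eq2} even in the presence of such zeros.
```
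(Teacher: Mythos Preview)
Your argument follows essentially the same route as the paper's proof: for sufficiency you apply $\Phi(\Bbf)$ to \eqref{eq3a} and identify the resulting coefficients with the Fourier coefficients of $\Phi\cdot(\Theta/\Phi)=\Theta$; for necessity you deduce square summability of the coefficients from a.s.\ absolute convergence (the paper cites Chow--Teicher, Theorem~5.1.4, for this), form the $L^2$ function $g$ with those Fourier coefficients, obtain $\Phi g=\Theta$, and divide. One small correction: your parenthetical justification that the zero set of $\Phi(e^{-i\cdot})$ has Lebesgue measure zero ``or else $g$ would not be in $L^2$'' does not work---nothing about $g\in L^2$ forces this. The fact is true for any nonconstant Laurent polynomial $\Phi$ and should simply be cited (the paper invokes Range, Theorem~3.7); once you replace that remark with a direct citation, your proof and the paper's coincide.
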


\begin{proof}
Suppose both conditions are fulfilled. Applying the operator $\Phi(\Bbf)$ on $(Y_\tbf)_{\tbf\in\Z^d}$ as defined in \eqref{eq3a} yields
\begin{align*}
\Phi(\Bbf)Y_\tbf=&Y_\tbf-\sum_{\nbf\in R}\phi_\nbf Y_{\tbf-\nbf}= \sum_{\kbf\in\Z^d} (\underbrace{\psi_\kbf-\sum_{\nbf\in R} \phi_\nbf \psi_{\kbf-\nbf}}_{=:\xi_\kbf}) Z_{\tbf-\kbf}.
\end{align*}
The random field  $(Y_\tbf)_{\tbf\in\Z^d}$ solves the ARMA equations, if the coefficients $(\xi_\kbf)_{\kbf\in\Z^d}$ fulfill
\begin{align}
\xi_\kbf:=\psi_\kbf-\sum_{\nbf\in R}\phi_\nbf \psi_{\kbf-\nbf}=\left\{
\begin{aligned}
 \theta_\kbf\quad  , \quad& \kbf\in S \backslash \{\textbf{0}\},\\
 1\quad ,\quad & \kbf=\textbf{0}, \\
 0 \quad , \quad&\text{otherwise.} \end{aligned}\right. \label{koefff}
\end{align}

To prove the validity of these equalities we compare the coefficients $(\xi_\kbf)_{\kbf\in\Z^d}$ with those of the corresponding Fourier series. Multiplying both sides of equation \eqref{eq3b} by $\Phi(e^{-i\tbf})$ yields
\begin{align}
\Theta(e^{-i\tbf})=1+ \sum_{\nbf\in S}\theta_\nbf  e^{-i\nbf \tbf}=\Phi(e^{-i\tbf}) \left(\sum_{\kbf\in\Z^d} \psi_\kbf e^{-i\kbf\tbf} \right)=\sum_{\kbf\in\Z^d} \left(\psi_\kbf -\sum_{\nbf\in R}\phi_\nbf \psi_{\kbf-\nbf} \right)  e^{-i\kbf\tbf}.\label{new}
\end{align}
Comparing the coefficients in equation \eqref{new}, the validity of \eqref{koefff} is obtained, which completes the proof of sufficiency. \bigbreak
Suppose the random field $(Y_\tbf)_{\tbf\in\Z^d}$ is a linear strictly stationary solution of the ARMA equations. Thus, it has a representation $Y_\tbf=\sum_{\kbf\in\Z^d}\psi_\kbf Z_{\tbf-\kbf}$ for $\tbf\in\Z^d$ for some sequence $(\psi_\kbf)_{\kbf\in\Z^d}\subset\C$,
where the right-hand side converges almost surely absolutely. By an application of Theorem 5.1.4 of Chow and Teicher \cite{Chow} this implies the square summability of the coefficients $(\psi_\kbf)_{\kbf\in\Z^d}$. The Theorem of Riesz-Fischer (see Stein and Weiss \cite{Stein}, Theorem 1.7) now implies that there exists a function $\Psi(e^{-i\cdot})$ in $L^2(\td)$, whose Fourier coefficients are precisely $(\psi_\kbf)_{\kbf\in\Z^d}$. Hence the Fourier expansion of $\Psi(e^{-i\cdot})$ is given by $\Psi(e^{-i\tbf})=\sum_{\kbf\in\Z^d} \psi_\kbf e^{-i\kbf\tbf}$ for $\tbf\in\td$.
Yet again, we can compare the coefficients of the ARMA equation $\Phi(\Bbf)Y_\tbf=\Theta(\Bbf)Z_\tbf$ and those of the product $\Phi(e^{-i\cdot})\Psi(e^{-i\cdot})$ and conclude
\begin{equation}
\Phi(e^{-i\tbf})\Psi(e^{-i\tbf})=\Theta(e^{-i\tbf}),\quad \tbf\in\td.\label{added1}
\end{equation}
We define the measurable set $N:=\{\tbf\in\td: \Phi(e^{-i\tbf})=0	\}$ and obtain by equation \eqref{added1}
\begin{align*}
\int_{\td\backslash N}\left|\frac{\Theta(e^{-i\tbf})}{\Phi(e^{-i\tbf})}\right|^2 d\lambda^d(\tbf)=\int_{\td\backslash N}\left|\Psi(e^{-i\tbf})\right|^2 d\lambda^d(\tbf)<\infty.\label{wqw}
\end{align*}
Furthermore, by Theorem 3.7 of Range \cite{Range}, the set $N$ is a $\lambda^d-$nullset. Thus, $\Theta(e^{-i\cdot})/\Phi(e^{-i\cdot})\in L^2(\td)$ and because of the uniqueness of the Fourier expansion, the Fourier coefficients of  $\Theta(e^{-i\cdot})/\Phi(e^{-i\cdot})$ are given by $(\psi_\kbf)_{\kbf\in\Z^d}$.
\end{proof}\\

An immediate question is under which conditions the right-hand side of equation \eqref{eq3a} converges almost surely absolutely. Before giving a sufficient condition in Proposition \ref{prop111}, we need the following lemma.

\begin{lemma}
For $n,d\in\N$ denote the cardinality of the set $\left\{	\kbf\in\Z^d: \abs{k_1}+\ldots+\abs{k_d}=n		\right\}$ by $h_d(n)$. Then $h_d(n)$ can be estimated from above by $C_d n^{d-1}$ for some constant $C_d>0$.
\label{lem1}
\end{lemma}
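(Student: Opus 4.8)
The plan is to bound the lattice points on the $\ell^1$-sphere of radius $n$ by a direct projection argument, rather than computing $h_d(n)$ exactly. The key observation is that any $\kbf$ in the set $\{\kbf\in\Z^d:\abs{k_1}+\ldots+\abs{k_d}=n\}$ automatically satisfies $\abs{k_i}\le n$ for every coordinate $i$, since all the summands are non-negative and sum to $n$. In particular each of the first $d-1$ coordinates lies in the finite set $\{-n,-n+1,\ldots,n\}$, which has cardinality $2n+1$.

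First I would fix the first $d-1$ coordinates $(k_1,\ldots,k_{d-1})$ and count the admissible values of the last one. Since $\abs{k_d}=n-(\abs{k_1}+\ldots+\abs{k_{d-1}})$ is then forced, there are at most two choices for $k_d$, namely $\pm\bigl(n-\sum_{i<d}\abs{k_i}\bigr)$ (collapsing to a single choice when the right-hand side vanishes and to none when it is negative). Combining this with the at most $(2n+1)^{d-1}$ choices for the first $d-1$ coordinates yields the crude bound
$$h_d(n)\le 2\,(2n+1)^{d-1}.$$

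It then remains only to absorb the constant: for $n\ge1$ one has $2n+1\le 3n$, so that $(2n+1)^{d-1}\le 3^{d-1}n^{d-1}$, and hence $h_d(n)\le 2\cdot 3^{d-1}n^{d-1}$. This is the assertion with $C_d:=2\cdot 3^{d-1}$.

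As an alternative I would note that one can instead argue by induction on $d$ through the recursion $h_d(n)=h_{d-1}(n)+2\sum_{m=0}^{n-1}h_{d-1}(m)$, obtained by splitting according to the value of $k_d$ and setting $h_{d-1}(0):=1$, using $\sum_{m=1}^{n-1}m^{d-2}\le n^{d-1}$ to pass from the exponent $d-2$ to $d-1$. I do not expect any genuine obstacle here; the only point that requires a moment's care is the observation that fixing all but one coordinate determines the last one up to sign, which is precisely what turns the naive $\go(n^d)$ volume count into the desired $\go(n^{d-1})$ surface count.
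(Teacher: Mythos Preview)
Your main argument is correct and in fact more direct than the paper's. The paper proceeds by induction on $d$: it notes $h_1(n)=2$, and for the inductive step decomposes
\[
\{\kbf\in\Z^{d+1}:\textstyle\sum_i|k_i|=n\}=\bigcup_{k=0}^n\{\kbf:\textstyle\sum_{i\le d}|k_i|=n-k,\ |k_{d+1}|=k\},
\]
bounds each piece with $k<n$ by $2C_d(n-k)^{d-1}$ via the induction hypothesis (and the piece $k=n$ by $2$), and sums to get $h_{d+1}(n)\le 2C_d n^d+2$. This is exactly the recursion you sketch as your ``alternative''. Your primary projection argument sidesteps the induction entirely: by observing that the first $d-1$ coordinates lie in $\{-n,\ldots,n\}$ and that they determine the last coordinate up to sign, you obtain the explicit constant $C_d=2\cdot 3^{d-1}$ in one line. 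The paper's inductive route is perhaps more systematic but yields no explicit constant and needs the extra estimate $\sum_{m=1}^n m^{d-1}\le n^d$ along the way.
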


\begin{proof}
For $d=1$ we have $h_1(n)=\left|\left\{	k\in\Z: \abs{k}=n		\right\}\right|=2$.
Suppose the assumption is valid for $d\in\N$. Then the following identity holds for $d+1$
\begin{equation*}
\left\{	\kbf\in\Z^{d+1}: \abs{k_1}+\ldots+\abs{k_{d+1}}=n		\right\}=\bigcup_{k=0}^n \{\kbf\in\Z^{d+1}: \sum_{i=1}^d\abs{k_i}=n-k, \abs{k_{d+1}}=k\}.
\end{equation*}
Each set of this union with $k\neq n$ has cardinality less than or equal to $2C_d(n-k)^{d-1}$ by assumption and for $k=n$ the cardinality is two. Thus we can conclude $h_{d+1}(n)\le 2C_d n^{d}+2\le C_{d+1} n^d$ for some constant $C_{d+1}>0$.
\end{proof}\\

We define for $z\ge0$ the postive part of the natural logarithm as $\log_+(z):=\max(\log(z), 0)$. In the following proposition sufficient conditions for the existence of a linear strictly stationary solution are given.
\begin{prop}
Let $R$ and $S$ be subsets of $\Z^d\backslash\{\boldsymbol 0\}$. If the autoregressive polynomial $\Phi(e^{-i\cdot})$ has no zero on $\td$, 
then for appropriate $\boldsymbol r=(r_1,\ldots,r_d), \boldsymbol \rho=(\rho_1,\ldots,\rho_d)$, $0\le r_i <1<\rho_i$, $i=1\ldots,d$, a Laurent expansion of $\Theta(e^{-i\cdot})/\Phi(e^{-i\cdot})$ exists given by
\begin{equation*}
\frac{\Theta(\zbf)}{\Phi(\zbf)}=\sum_{\kbf\in\Z^d}\psi_\kbf \zbf^\kbf ,\quad \zbf\in K(\textbf{r},\boldsymbol \rho):=\{\zbf=(z_1,\ldots,z_d)\in\C^d: r_i<\abs{z_i}<\rho_i, \ i=1,\ldots,d\}.
\end{equation*}
If further
\begin{equation*}
\ew \log^d_+|Z_1|<\infty,
\end{equation*}
then
\begin{equation*}
Y_t:=\sum_{\kbf\in\Z^d}\psi_\kbf Z_{\tbf-\kbf},\quad \tbf\in\Z^d,
\end{equation*}
converges almost surely absolutely. In particular, the random field $(Y_\tbf)_{\tbf\in\Z^d}$ solves the ARMA equation \eqref{eq1}.
\label{prop111}
\end{prop}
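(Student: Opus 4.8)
The plan is to produce the Laurent coefficients $(\psi_\kbf)_{\kbf\in\Z^d}$, show they decay geometrically, and then run a Borel--Cantelli argument to pass from the moment condition $\ew\log^d_+|Z_1|<\infty$ to almost sure absolute convergence. For the analytic input, note that $\Phi$ is a (Laurent) polynomial, hence continuous, and by assumption it has no zero on the compact torus $\{\zbf\in\C^d:\abs{z_i}=1,\ i=1,\ldots,d\}$. Continuity then yields an open neighbourhood of this torus on which $\Phi$ stays away from zero, and such a neighbourhood contains a polyannulus $K(\boldsymbol r,\boldsymbol\rho)$ with $0\le r_i<1<\rho_i$. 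On $K(\boldsymbol r,\boldsymbol\rho)$ the quotient $\Theta/\Phi$ is holomorphic, so by the theory of Laurent expansions in several variables (Range \cite{Range}) it admits the stated expansion $\sum_{\kbf\in\Z^d}\psi_\kbf\zbf^\kbf$.

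Next I would extract geometric decay of $\psi_\kbf$ from the Cauchy integral formula for Laurent coefficients: integrating over a distinguished polytorus $\{\abs{z_i}=s_i\}$ with $r_i<s_i<\rho_i$ gives $\abs{\psi_\kbf}\le M(\boldsymbol s)\prod_{i=1}^d s_i^{-k_i}$, where $M(\boldsymbol s)=\sup_{\abs{z_i}=s_i}\abs{\Theta/\Phi}$. Choosing, coordinatewise, $s_i$ close to $\rho_i$ when $k_i\ge0$ and close to $r_i$ when $k_i<0$ converts this into
\begin{equation*}
\abs{\psi_\kbf}\le C\,q^{\,\abs{k_1}+\cdots+\abs{k_d}}
\end{equation*}
for some constant $C>0$ and some $0<q<1$. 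This is exactly the step where having the annulus strictly larger than the torus is essential.

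The hard part is the probabilistic estimate, where the moment hypothesis $\ew\log^d_+|Z_1|<\infty$ and the dimension $d$ interact. Because the $Z_\tbf$ are i.i.d., for a fixed $\epsilon>0$ I would apply the first Borel--Cantelli lemma to the events $\{\abs{Z_{\tbf-\kbf}}>e^{\epsilon(\abs{k_1}+\cdots+\abs{k_d})}\}$. Grouping by $\abs{k_1}+\cdots+\abs{k_d}=n$ and bounding the number of lattice points on each shell by $C_d n^{d-1}$ via Lemma \ref{lem1} gives
\begin{equation*}
\sum_{\kbf\in\Z^d}\PP\!\left(\abs{Z_{\tbf-\kbf}}>e^{\epsilon(\abs{k_1}+\cdots+\abs{k_d})}\right)\le C_d\sum_{n\ge0}n^{d-1}\,\PP\!\left(\log_+\abs{Z_1}>\epsilon n\right),
\end{equation*}
and the right-hand side is finite precisely because of the standard equivalence $\ew W^d<\infty\iff\sum_n n^{d-1}\PP(W>n)<\infty$ for $W=\log_+\abs{Z_1}\ge0$. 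Hence almost surely only finitely many $\kbf$ violate $\abs{Z_{\tbf-\kbf}}\le e^{\epsilon(\abs{k_1}+\cdots+\abs{k_d})}$; letting $\epsilon\downarrow0$ along a sequence yields $\limsup_{\abs{k_1}+\cdots+\abs{k_d}\to\infty}\abs{Z_{\tbf-\kbf}}^{1/(\abs{k_1}+\cdots+\abs{k_d})}\le1$ almost surely.

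Finally I would combine the two bounds. Almost surely
\begin{equation*}
\limsup_{\abs{k_1}+\cdots+\abs{k_d}\to\infty}\abs{\psi_\kbf Z_{\tbf-\kbf}}^{1/(\abs{k_1}+\cdots+\abs{k_d})}\le q<1,
\end{equation*}
so a root-test argument finishes the proof: fixing $\beta\in(q,1)$, the summands are eventually dominated by $\beta^{\abs{k_1}+\cdots+\abs{k_d}}$, and $\sum_n C_d n^{d-1}\beta^n<\infty$ (again by Lemma \ref{lem1}), giving $\sum_{\kbf\in\Z^d}\abs{\psi_\kbf Z_{\tbf-\kbf}}<\infty$ almost surely. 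Thus the series defining $Y_\tbf$ converges almost surely absolutely. Since $\Phi(e^{-i\cdot})$ is continuous and nonvanishing on $\td$, it is bounded away from zero, so $\Theta(e^{-i\cdot})/\Phi(e^{-i\cdot})$ is bounded and lies in $L^2(\td)$; restricting the Laurent expansion to $\abs{z_i}=1$ identifies $(\psi_\kbf)_{\kbf\in\Z^d}$ with its Fourier coefficients \eqref{eq3b}. Theorem \ref{thm1} then guarantees that $(Y_\tbf)_{\tbf\in\Z^d}$ is a linear strictly stationary solution of \eqref{eq1}.
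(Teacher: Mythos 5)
Your proposal is correct and follows essentially the same route as the paper: holomorphy of $\Theta/\Phi$ on a polyannulus around the torus, Laurent coefficients with geometric (Cauchy-estimate) decay, the shell count of Lemma \ref{lem1}, the standard equivalence $\ew W^d<\infty \Leftrightarrow \sum_n n^{d-1}\PP(W>n)<\infty$ for $W=\log_+\abs{Z_{\textbf{0}}}$, and Borel--Cantelli to produce an almost sure geometric majorant. The only cosmetic differences are that you apply Borel--Cantelli to $\abs{Z_{\tbf-\kbf}}$ alone and conclude via a root test, whereas the paper applies it directly to $\abs{\psi_\kbf Z_{\tbf-\kbf}}$ with threshold $e^{-c'(\abs{k_1}+\cdots+\abs{k_d})}$, and that you spell out the final identification of $(\psi_\kbf)$ with the Fourier coefficients and the appeal to Theorem \ref{thm1}, which the paper leaves implicit.
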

\begin{proof}
If the autoregressive polynomial $\Phi(e^{-i\cdot})$ does not possess any zeros on $\td$, then the quotient $\Theta(\zbf)/\Phi(\zbf)$ is holomorphic in $K(\textbf{r},\boldsymbol \rho)$, where $0\le r_i<1<\rho_i$ for suitable $\textbf{r}=(r_1,\ldots,r_d), \boldsymbol \rho=(\rho_1,\ldots,\rho_d)$. Furthermore Proposition 1.4 in Range \cite{Range} assures the existence of a Laurent expansion
\begin{equation*}
\Theta(\zbf)/\Phi(\zbf)=\sum_{\kbf\in\Z^d}\psi_\kbf \zbf^\kbf ,\quad \zbf\in K(\textbf{r},\boldsymbol \rho), \quad 0\le \textbf{r}<1<\boldsymbol \rho,
\end{equation*}
and the validity of the Cauchy estimates in $d$ variables, i.e. there are constants $M, c > 0$ such that
\begin{equation*}
|\psi_\kbf|\le M e^{-c(\abs{k_1}+\ldots+\abs{k_d})},  \quad \forall \kbf\in\Z^d.
\end{equation*}
For $n\in\N$ the number of possibilities of $\kbf\in\Z^d$ fulfilling $|k_1|+\ldots+|k_d|=n$ can be estimated from above by $C_d n^{d-1}$, $C_d>0$, see Lemma \ref{lem1}. Thus, for $c'\in(0,c)$ it follows
\begin{align}
\sum_{\kbf\in\Z^d} \PP(\ | \psi_\kbf Z_{\tbf-\kbf} |>e^{-c' (|k_1|+\ldots+|k_d|) }\ ) &\le\sum_{\kbf\in\Z^d}  \PP(M | Z_{\tbf-\kbf}|>e^{(c-c') (|k_1|+\ldots+|k_d|)} ) \notag \\
&= \sum_{\kbf\in\Z^d}  \PP(\log_+(M |Z_{\textbf{0}}|)>(c-c') (|k_1|+\ldots+|k_d|) ) \label{reihe}\notag \\
&\le 1+ C_d \sum_{n=1}^{\infty} n^{d-1}  \ \PP(\log_+(M |Z_{\textbf{0}}|)> n  (c-c')  ). 
\end{align}
We define the random variable $X=\log_+(M |Z_{\textbf{0}}|)/(c-c')$. 
Using the two inequalities 
\begin{align*}
\PP(X>n)\le \PP(X>x)\qquad  &\mbox{for} \ x\in(n-1,n], \quad n\in\N, \\
n \le 2x \qquad  &\mbox{for} \ x\in(n-1,n], \quad n\in\N \backslash \{1\}, 
\end{align*}
the last series in equation (\ref{reihe}) can be estimated from above as follows
\begin{align}
\sum_{n=2}^{\infty} n^{d-1} \ \PP(X >  n  )  \le& \sum_{n=2}^{\infty} \int_{n-1}^{n} \PP(X>x) \ (2x)^{d-1} \ dx  \notag \\
=& \ 2^{d-1}\int_1^{\infty} \PP(X>x) \ x^{d-1} \ dx \notag \\
\le& \ \frac{2^{d-1}}{d} \ \ew(X^d)=  \frac{2^{d-1}}{d} \ew\left(\frac{\log_+(M |Z_{\textbf{0}}|)}{(c-c')}\right)^d\label{ewert}< \infty,
\end{align}
where the last inequality is valid, because the expected value in (\ref{ewert}) is finite, if and only if  $\ew \log_+^d |Z_{\textbf{0}}|<\infty$. 
Applying the Borel Cantelli Lemma implies that the event
\begin{equation*}
 \{ \ |\psi_\kbf Z_{\tbf-\kbf}| > e^{-c' (|k_1|+\ldots+|k_d|)} \	\mbox{for infinitely many} \ \kbf \in \Z^d \}
 \end{equation*}
has probability zero. The almost sure majorant $\sum_{\kbf\in\Z^d} e^{-c' (|k_1|+\ldots+|k_d|)}$ is absolutely convergent, and hence the series $Y_\tbf=\sum_{\kbf\in\Z^d}\psi_\kbf Z_{\tbf-\kbf}$ converges almost surely absolutely for all $\tbf\in\Z^d$.
\end{proof}\\

For $d=1$ the condition $\Phi(z)\neq0$ for all $z\in\C, \abs{z}=1$, is a necessary and sufficient condition for the uniqueness of a strictly stationary solution, provided one exists, see Brockwell and Lindner \cite{Brockwelllindner}. For $d>1$ we do not know whether the analog condition $\Phi(e^{-i\tbf})\neq 0$ for all $\tbf\in\td$ is sufficient for the uniqueness of linear strictly stationary solutions. However, the necessity of this condition is shown in the following lemma.

\begin{lemma}
Suppose $(Y_\tbf)_{\tbf\in\Z^d}$ is a strictly stationary solution of \eqref{eq1}. Suppose further that $\Phi(e^{-i\cdot})$ has a zero $\boldsymbol\lambda\in\td$. Finally, suppose the underlying probability space is rich enough to support a random variable $U$, which is independent of $(Y_\tbf)_{\tbf\in\Z^d}$ and uniformly distributed on $[0,1]$. Then $(Y_\tbf+ e^{i2\pi U}e^{i\tbf\boldsymbol\lambda})_{\tbf\in\Z^d}$ is another strictly stationary solution of \eqref{eq1}. In particular, the strictly stationary solution of \eqref{eq1} is not unique.
\end{lemma}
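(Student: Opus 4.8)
The plan is to add to $(Y_\tbf)$ a suitable solution $(W_\tbf)_{\tbf\in\Z^d}$ of the homogeneous equation $\Phi(\Bbf)W_\tbf=0$, built from the zero $\boldsymbol\lambda$ and the independent phase $U$, and then to check that the sum is again a strictly stationary solution. First I would set $V:=e^{i2\pi U}$, so that $V$ is uniformly distributed on the unit circle and independent of the whole field $(Y_\tbf)_{\tbf\in\Z^d}$, and write $W_\tbf=V e^{i\tbf\boldsymbol\lambda}$. The homogeneous-equation check is a short computation: since $W_{\tbf-\nbf}=W_\tbf e^{-i\nbf\boldsymbol\lambda}$, we obtain
$$\Phi(\Bbf)W_\tbf = W_\tbf\Bigl(1-\sum_{\nbf\in R}\phi_\nbf e^{-i\nbf\boldsymbol\lambda}\Bigr)=W_\tbf\,\Phi(e^{-i\boldsymbol\lambda})=0,$$
because $\boldsymbol\lambda$ is a zero of $\Phi(e^{-i\cdot})$. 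By linearity of $\Phi(\Bbf)$ it then follows that $\Phi(\Bbf)(Y_\tbf+W_\tbf)=\Phi(\Bbf)Y_\tbf=\Theta(\Bbf)Z_\tbf$, so the perturbed field again solves \eqref{eq1}.

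The substantive part is the strict stationarity of $(Y_\tbf+W_\tbf)$, i.e. shift invariance of its law. Fix $\sbf\in\Z^d$. The only effect of the shift on $W$ is a deterministic rotation, $W_{\tbf+\sbf}=(Ve^{i\sbf\boldsymbol\lambda})e^{i\tbf\boldsymbol\lambda}$; set $V':=Ve^{i\sbf\boldsymbol\lambda}$. I would then establish the joint distributional identity $\bigl((Y_{\tbf+\sbf})_\tbf,\,V'\bigr)\stackrel{d}{=}\bigl((Y_\tbf)_\tbf,\,V\bigr)$ from three ingredients: (a) strict stationarity of $(Y_\tbf)$ gives $(Y_{\tbf+\sbf})_\tbf\stackrel{d}{=}(Y_\tbf)_\tbf$; (b) rotation invariance of the uniform law on the circle gives $V'\stackrel{d}{=}V$; and (c) independence of $V$ from $(Y_\tbf)$, hence of $V'$ (a deterministic function of $V$) from $(Y_{\tbf+\sbf})_\tbf$, makes both pairs products of their respective marginals. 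Applying the measurable map $\bigl((y_\tbf)_\tbf,v\bigr)\mapsto (y_\tbf+v e^{i\tbf\boldsymbol\lambda})_\tbf$ to both sides of this identity yields $(Y_{\tbf+\sbf}+W_{\tbf+\sbf})_\tbf\stackrel{d}{=}(Y_\tbf+W_\tbf)_\tbf$, which is exactly the required shift invariance.

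Finally, non-uniqueness is immediate: since $\abs{W_\tbf}=1$ almost surely, the fields $(Y_\tbf)$ and $(Y_\tbf+W_\tbf)$ differ almost surely at every site, so \eqref{eq1} admits at least two distinct strictly stationary solutions. I expect the only delicate point to be the bookkeeping in the joint-law step of the second paragraph: one must keep the rotation invariance of $V$ and the independence of $V$ from the $Y$-field logically separate from the strict stationarity of $(Y_\tbf)$ itself, and invoke them in the correct order, so that a single measurable map transports the combined equality in distribution without circularity.
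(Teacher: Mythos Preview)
Your proposal is correct and follows essentially the same approach as the paper: define $W_\tbf=e^{i2\pi U}e^{i\tbf\boldsymbol\lambda}$, verify $\Phi(\Bbf)W_\tbf=W_\tbf\,\Phi(e^{-i\boldsymbol\lambda})=0$, and use independence together with the rotation invariance of $e^{i2\pi U}$ to conclude that $(Y_\tbf+W_\tbf)$ is strictly stationary. The paper merely asserts that $(W_\tbf)$ is strictly stationary and that the sum of independent strictly stationary fields is strictly stationary, whereas you spell out the joint-law argument explicitly; but the underlying idea is identical.
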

\begin{proof}
It is easy to see that the random field
\begin{equation*}
W_\tbf= e^{i2\pi U}e^{i\tbf\boldsymbol\lambda},\quad \tbf\in\Z^d,
\end{equation*}
is strictly stationary. Because of the independence of $U$ and $(Y_\tbf)_{\tbf\in\Z^d}$, the random field $(X_\tbf)_{\tbf\in\Z^d}$ defined by $X_\tbf=Y_\tbf+W_\tbf$, is also strictly stationary. Furthermore we have
\begin{equation*}
\Phi(\Bbf)W_\tbf=W_\tbf-\sum_{\nbf\in R} \phi_\nbf W_{\tbf-\nbf}=e^{i\tbf\boldsymbol\lambda}e^{i2\pi U} \Phi(e^{-i\boldsymbol\lambda})=0,
\end{equation*}
which shows that $(X_\tbf)_{\tbf\in\Z^d}$ is another solution of the ARMA equation \eqref{eq1}.
\end{proof}\bigbreak
Notice that, if the conditions of the preceding lemma are fulfilled and $(Y_\tbf)_{\tbf\in\Z^d}$ is a linear strictly stationary solution, then the random field $(Y_\tbf+ e^{i2\pi U}e^{i\tbf\boldsymbol\lambda})_{\tbf\in\Z^d}$ is an example for a strictly stationary solution which is not linear.


\section{Causal solutions}\setcounter{equation}{0}
In this section we study necessary conditions and sufficient conditions for the existence of causal solutions. We define for $\tbf=(t_1,\ldots,t_d), \sbf=(s_1,\ldots,s_d)\in\Z^d$ the index sets $\{\sbf\le\tbf\}$ induced by the relation \glqq$\le$\grqq \ on $\Z^d$:
$$\{\sbf\le\tbf\}:=\{\sbf\in\Z^d: \ s_i\le t_i,\ i=1,\ldots,d\}.$$
\begin{defi} A strictly stationary random field $(Y_{\tbf})_{\tbf\in\Z^d}$, which fulfills the ARMA equation \eqref{eq1}, is called {\it causal solution} of the spatial ARMA model, if $Y_\tbf$ is measurable with respect to $\sigma(Z_\sbf: \sbf\le \tbf)$ for each $\tbf\in\Z^d$.\label{defii}
\end{defi}
When considering causal solutions, it makes sense to restrict the index sets $R,S$ in equation \eqref{eq1} to subsets of $\N_0^d\backslash\{\boldsymbol 0\}$. We assume this throughout the whole section. We will see that the symmetrization $(\tilde Y_{\tbf})_{\tbf\in\Z^d}$ of a causal solution $(Y_{\tbf})_{\tbf\in\Z^d}$ admits a linear representation $\tilde Y_\tbf=\sum_{\kbf\in\N_0^d}\psi_\kbf \tilde Z_{\tbf-\kbf}$ for some coefficients $(\psi_\kbf)_{\kbf\in\N_0^d}\subset\C$ and a symmetrization $(\tilde Z_{\tbf})_{\tbf\in\Z^d}$ of $(Z_{\tbf})_{\tbf\in\Z^d}$.
But in contrast to the definition of linear strictly stationary solutions, a specific type of convergence is not required by Definition \ref{defii}. However, implicitly this sum has to convergence {\it almost surely in the rectangular sense}, as we will see later on in Theorem \ref{theorem2}.
\begin{defi}[Klesov \cite{Klesov}, Definitions 1 and 3]
Let $(Z_\kbf)_{\kbf\in\N_0^d}$ be a real-valued random field. The multiple series $\sum_{\kbf\in\N_0^d} Z_{\kbf}$ {\it converges almost surely in the rectangular sense}, if the limits
$$\lim_{k\to\infty} \sum_{k_1=0}^{N_{1k}}\cdots \sum_{k_d=0}^{N_{dk}} Z_\kbf,$$
for all sequences $\left(N_{1k},\ldots,N_{dk}\right)_{k\in\N}\subset\N_0^d$ with $\min(N_{1k},\ldots,N_{dk})\to\infty \ (k\to\infty)$ almost surely exist and coincide. 
\end{defi}
For this mode of convergence a generalization to multiple series of the three series theorem of Kolmogorov is valid. Precisely, the following theorem holds.
\begin{theorem}[Klesov \cite{Klesov2} and Klesov \cite{Klesov}, Theorem C]
Let $(X_\nbf)_{\nbf\in\N_0^d}$ be a real-valued random field of independent random variables and define $X_\nbf^c:= X_\nbf \indi_{\{\abs{X_\nbf}<c\}}$. Then almost sure convergence of 
\begin{equation}
\sum_{\nbf\in\N_0^d}X_\nbf,\label{new11}
\end{equation}
in the rectangular sense and the condition
\begin{equation}
\PP\left(\abs{X_{\nbf_k}}>\epsilon\right)\to 0 \quad (k\to\infty),\quad\forall \epsilon>0, \label{redu}
\end{equation}
for all sequences $(\nbf_k)_{k\in\N}$ with $\max(n_{1k},\ldots,n_{dk})\to\infty$, are equivalent to the convergence of the following three series for some $c>0$, and hence for any $c>0$: 
\begin{align*}
&\sum_{\nbf\in\N_0^d} \PP(\abs{X_\nbf}\ge c)<\infty\tag{A},\\
&\sum_{\nbf\in\N_0^d} \ew(X^c_\nbf) \tag{B},\\
&\sum_{\nbf\in\N_0^d} \var(X^c_\nbf)\tag{C}.
\end{align*}
Here, convergence of $(B)$ is to be understood as (almost surely) rectangular.\label{theoremklesov2}
\end{theorem}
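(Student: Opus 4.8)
The plan is to transcribe the classical Kolmogorov three--series theorem into the multiparameter rectangular setting, replacing each one--dimensional probabilistic tool by its $\N_0^d$--analogue. Throughout I write $S_\Nbf:=\sum_{\nbf\le\Nbf}X_\nbf$ for the rectangular partial sums, $X_\nbf^c$ for the truncations as in the statement, and I use the partial order $\le$ on $\N_0^d$ already introduced. For the forward direction, assume (A), (B), (C) hold for some $c>0$. First I would apply the Borel--Cantelli lemma to (A) to conclude that, almost surely, $\abs{X_\nbf}\ge c$ for only finitely many $\nbf$; hence $X_\nbf$ and $X_\nbf^c$ differ in only finitely many terms and the rectangular convergence of $\sum_{\nbf}X_\nbf$ is equivalent to that of $\sum_{\nbf}X_\nbf^c$. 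Next I would center, writing $X_\nbf^c=(X_\nbf^c-\ew X_\nbf^c)+\ew X_\nbf^c$; the deterministic part converges rectangularly by (B), so it remains to treat the centered field. Its rectangular partial sums form an $L^2$--bounded multiparameter martingale for the filtration indexed by $(\N_0^d,\le)$, the second moments being controlled by $\sum_{\nbf}\var(X_\nbf^c)<\infty$ from (C). A Cairoli--type $L^2$ maximal inequality then upgrades the $L^2$--Cauchy property of $(S_\Nbf)$ to almost sure convergence in the rectangular sense, yielding a.s. rectangular convergence of $\sum_{\nbf}X_\nbf$.

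For condition \eqref{redu} in the forward direction I would exploit that each of (A) and (C) is a convergent series of nonnegative terms over $\N_0^d$, so its general term tends to $0$ as $\max(n_1,\ldots,n_d)\to\infty$ (every superlevel set is finite, hence contained in a box); combining $\PP(\abs{X_\nbf}\ge c)\to0$ and $\var(X_\nbf^c)\to0$ with Chebyshev's inequality handles $\epsilon\ge c$ and, after controlling the centering terms, the range $\epsilon<c$. For the reverse direction, assume a.s. rectangular convergence of $\sum_{\nbf}X_\nbf$ together with \eqref{redu}. The point to keep in mind is that rectangular convergence only forces $X_\nbf\to0$ along sequences with $\min(n_1,\ldots,n_d)\to\infty$, whereas \eqref{redu} supplies the missing negligibility of the summands in the remaining directions, where $\max(n_1,\ldots,n_d)\to\infty$ but some coordinate stays bounded; this is precisely what legitimizes truncation at level $c$. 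I would then symmetrize the field to dispose of the unknown means and apply the lower bound in the multiparameter (reverse) maximal inequality: a.s. convergence of the symmetrized rectangular sums forces $\sum_{\nbf}\var(X_\nbf^c)<\infty$, which is (C). Condition (A) follows from an independence plus converse Borel--Cantelli argument, with \eqref{redu} closing the gap left by rectangular convergence, and (B) is recovered by subtracting the already--controlled centered and truncated contributions from the convergent original series.

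The hard part will be the passage between modes of convergence on the partially ordered index set $\N_0^d$: unlike the totally ordered case $d=1$, rectangular convergence does not control the sums taken along arbitrary monotone paths, so the one--parameter Kolmogorov maximal inequality has to be replaced by a genuinely two--sided multiparameter maximal inequality for rectangular maxima of independent fields (equivalently, a Cairoli inequality for the associated multiparameter martingale). Establishing the correct upper and lower maximal bounds, and verifying that the independent centered truncated field generates a multiparameter martingale to which they apply, is the delicate technical core on which both implications rest. A secondary, but conceptually essential, difficulty is the precise role of \eqref{redu}: because the rectangular mode only governs the regime $\min(n_1,\ldots,n_d)\to\infty$, the hypothesis \eqref{redu} is exactly the extra infinitesimality needed to bridge rectangular convergence and the full negligibility of individual summands, and its interaction with the merely rectangular (non--absolute) convergence of series (B) must be handled with care rather than read off termwise.
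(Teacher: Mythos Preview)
The paper does not prove this theorem: it is quoted verbatim from Klesov's work (references \cite{Klesov2} and \cite{Klesov}, Theorem~C) and used as a black box, so there is no ``paper's own proof'' to compare against. Your sketch follows the natural route one would expect---transplanting the classical Kolmogorov argument to $\N_0^d$ via Borel--Cantelli for (A), rectangular convergence of the deterministic centering series for (B), and a multiparameter $L^2$ maximal inequality (Cairoli-type) for (C), with symmetrization and a converse maximal/Borel--Cantelli step in the reverse direction---and this is essentially the strategy Klesov employs.

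One point deserves more care than you give it: in the forward direction you claim that \eqref{redu} follows from (A) and (C) via $\PP(|X_\nbf|\ge c)\to0$ and $\var(X_\nbf^c)\to0$ ``after controlling the centering terms.'' But (B) is only assumed to converge \emph{rectangularly}, which by itself does not force $\ew X_\nbf^c\to0$ along sequences with $\max(n_{1k},\ldots,n_{dk})\to\infty$ while some coordinate stays bounded. Indeed, for deterministic fields one can have (A), (B), (C) all trivially satisfied while \eqref{redu} fails (take $X_{0,j}=1$, $X_{1,j}=-1$, $X_{i,j}=0$ otherwise on $\N_0^2$). So either the equivalence in the stated form requires \eqref{redu} as a standing hypothesis on both sides, or an additional argument linking rectangular convergence of (B) to decay of its terms is needed; you should consult Klesov's original formulation to see precisely how this is handled rather than treat it as a routine step.
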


If $d=1$ and $\sum_{n=1}^\infty X_n$ converges almost surely, condition \eqref{redu} is fulfilled automatically. For $d>1$ the following example from Klesov \cite{Klesov} shows that this condition is not fulfilled in general.
\begin{example}
Define $X(i,j)=(-1)^j i$ for $i\ge 1$ and $j\le 2$ and for the rest $X(i,j)=0$. Then the convergence in the rectangular sense of $\sum_{(i,j)\in\N_0^2} X(i,j)$ is clear, since 
$$\sum_{i,j=1}^{n} X(i,j)=0\quad \mbox{for all} \ n\ge 2.$$
But the series (A) $\sum_{i,j=0}^\infty \PP(\abs{X(i,j)}>c)$ diverges for all $c>0$. Notice further that $\eqref{redu}$ does not hold in this example.
\end{example}

In addition Klesov \cite{Klesov} shows that the condition \eqref{redu} can be dropped, if $(X_\nbf)_{\nbf\in\N_0^d}$ is symmetric or the random variables are positive. We state it in the following corollary. For our purposes, mainly the symmetric case is relevant.

\begin{corollar}[Klesov \cite{Klesov}, Corollaries 3 and 4]
Let $(X_\nbf)_{\nbf\in\N_0^d}$ be a real-valued random field of independent random variables. Then almost sure convergence of \eqref{new11} in the rectangular sense is equivalent to the convergence of $(A)$ and $(C)$ for some $c>0$, and hence for any $c>0$, if the random variables $(X_\nbf)_{\nbf\in\N_0^d}$ are symmetric, and equivalent to the convergence of (A) and (B) for some $c>0$, and hence for any $c>0$, if the random variables $(X_\nbf)_{\nbf\in\N_0^d}$ are positive.\label{thmklesov}
\end{corollar}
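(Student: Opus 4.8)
The plan is to deduce both equivalences from Theorem \ref{theoremklesov2}, by showing that under each sign assumption the auxiliary hypothesis \eqref{redu} becomes automatic and that the one series among $(A)$, $(B)$, $(C)$ that is dropped is controlled by the two that are retained. Recall that the theorem states that a.s.\ rectangular convergence of \eqref{new11} together with \eqref{redu} is equivalent to the simultaneous convergence of $(A)$, $(B)$ and $(C)$. So in each case it suffices (i) to reduce this three-series condition to the claimed two-series condition, and (ii) to show that a.s.\ rectangular convergence by itself already forces \eqref{redu}.

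The reductions in (i) are short. If $(X_\nbf)$ is symmetric, then each truncation $X_\nbf^c=X_\nbf\indi_{\{\abs{X_\nbf}<c\}}$ is again symmetric and bounded, hence integrable with $\ew(X_\nbf^c)=0$; thus series $(B)$ is identically zero and convergence of $(A)$ and $(C)$ is equivalent to convergence of $(A)$, $(B)$ and $(C)$. If instead $X_\nbf\ge 0$, then $0\le X_\nbf^c<c$, so $\var(X_\nbf^c)\le\ew((X_\nbf^c)^2)\le c\,\ew(X_\nbf^c)$; the terms of $(B)$ being nonnegative, convergence of $(B)$ gives $\sum_\nbf\var(X_\nbf^c)\le c\sum_\nbf\ew(X_\nbf^c)<\infty$, i.e.\ $(C)$. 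Hence convergence of $(A)$ and $(B)$ is equivalent to convergence of all three. In both settings one implication of the corollary follows immediately: the two-series condition upgrades to the full three-series condition, which by Theorem \ref{theoremklesov2} yields a.s.\ rectangular convergence.

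For the converse I must recover \eqref{redu} from a.s.\ rectangular convergence. It is useful to note that each summand is the mixed difference $X_\nbf=\sum_{\boldsymbol\epsilon\in\{0,1\}^d}(-1)^{d-\abs{\boldsymbol\epsilon}}S_{\nbf-\boldsymbol 1+\boldsymbol\epsilon}$ of the rectangular partial sums $S_\mbf=\sum_{\kbf\le\mbf}X_\kbf$ over the corners of the unit cube at $\nbf$. For a sequence $\nbf_k$ with $\min(n_{1k},\ldots,n_{dk})\to\infty$ every corner index also has minimal coordinate tending to infinity, so each corner sum converges a.s.\ to the common limit $S$; since $\sum_{\boldsymbol\epsilon\in\{0,1\}^d}(-1)^{d-\abs{\boldsymbol\epsilon}}=(1-1)^d=0$, we obtain $X_{\nbf_k}\to 0$ a.s., and \eqref{redu} holds along such sequences with no sign condition at all. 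In the positive case even the remaining sequences, with only $\max(n_{ik})\to\infty$, are handled directly: monotonicity of $(S_\mbf)$ in each coordinate shows that a.s.\ rectangular convergence forces $\sum_\kbf X_\kbf<\infty$ a.s., so for each $\epsilon>0$ the set $\{\kbf:X_\kbf>\epsilon\}$ is a.s.\ finite and therefore $X_{\nbf_k}\to 0$ a.s.\ whenever $\max(n_{ik})\to\infty$. Combined with the theorem this closes the positive case.

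The genuine difficulty is the symmetric case along sequences for which $\max(n_{ik})\to\infty$ while $\min(n_{ik})$ stays bounded, which is exactly the regime in which the Example shows that \eqref{redu} may fail absent a sign restriction. After passing to a subsequence one may assume that a fixed nonempty set of coordinates tends to infinity and the rest are constant, so the mixed-difference representation now involves corner sums with small frozen coordinates, about which rectangular convergence is silent. The plan is to replace the convergence input by a symmetrization argument: for symmetric independent fields one iterates the one-dimensional L\'evy inequality coordinate by coordinate — grouping the sum in each coordinate into independent symmetric blocks and applying L\'evy's inequality conditionally — to obtain a maximal inequality bounding the rectangular increments of $(S_\mbf)$ by their values on large rectangles. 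One then argues by contradiction: were some $X_{\nbf_k}$ to fail to vanish in probability, these surviving independent symmetric summands would, through the maximal inequality, produce rectangular increments that are not a.s.\ Cauchy, contradicting rectangular convergence; this gives \eqref{redu}, and the theorem then converts a.s.\ convergence into $(A)$, $(B)$, $(C)$, hence $(A)$ and $(C)$. I expect this last step — making rigorous the passage from rectangular Cauchyness to termwise smallness in the bounded-minimum regime — to be the main obstacle, since it is precisely where the hypothesis \eqref{redu} of the general theorem must be paid for by symmetry.
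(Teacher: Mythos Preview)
The paper does not prove this corollary: it is quoted verbatim from Klesov \cite{Klesov} (Corollaries 3 and 4), and the only accompanying remark is the sentence preceding the statement, noting that Klesov shows condition \eqref{redu} may be dropped under symmetry or positivity. So there is no in-paper argument to compare against; your proposal is effectively an attempt to reprove Klesov's result from Theorem \ref{theoremklesov2}.

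Your overall architecture is correct, and the easy parts are fine. The reductions in (i) are exactly right: symmetry forces $\ew(X_\nbf^c)=0$, and positivity gives $\var(X_\nbf^c)\le c\,\ew(X_\nbf^c)$, so in either case the two retained series already encode all three. The forward implication (two series $\Rightarrow$ a.s.\ rectangular convergence) therefore follows immediately from the theorem. For the converse in the positive case your monotonicity argument is complete: a.s.\ rectangular convergence plus monotonicity of the partial sums gives $\sup_\mbf S_\mbf<\infty$ a.s., hence $\{\kbf:X_\kbf>\epsilon\}$ is a.s.\ finite for every $\epsilon>0$, and \eqref{redu} holds along any sequence with $\max(n_{ik})\to\infty$.

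The only genuine gap is the one you yourself flag: in the symmetric case, recovering \eqref{redu} along sequences where $\min(n_{ik})$ stays bounded. Your mixed-difference identity handles only the regime $\min(n_{ik})\to\infty$, and your proposed L\'evy-inequality route is the right idea but is left as a sketch. To close it one iterates L\'evy's maximal inequality coordinatewise (conditioning on the other coordinates, the blocks in the active coordinate are independent and symmetric) to control $\max_{\mbf\le\Nbf}|S_\mbf|$ and, more to the point, the increments $S_\Nbf-S_{\Nbf-e_j}$ uniformly in the frozen coordinates; from a.s.\ rectangular Cauchyness one then extracts that each one-dimensional ``slice'' series $\sum_n X_{(n,\mbf')}$ (with $\mbf'$ fixed in the bounded coordinates) converges a.s., which forces its terms to zero and yields \eqref{redu}. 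This is essentially what Klesov does, so your plan is sound; it just needs to be carried out rather than announced.
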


A random field $(Z_\tbf)_{\tbf\in\Z^d}$ is called {\it deterministic}, if there is a constant $K$ such that $\PP(Z_\tbf=K)=1$ for all $\tbf\in\Z^d$. Consider a nondeterministic real-valued i.i.d. stochastic process $(X_n)_{n\in\N}$ and coefficients $(\psi_n)_{n\in\N}\subset \R$. If the series $\sum_{n\in\N} \psi_n X_n$
converges almost surely absolutely, then by application of Theorem 5.1.4 of \cite{Chow} it can be concluded that $\sum_{n\in\N}\psi_n^2<\infty$. The same is true for multiple series, which converge almost surely in the rectangular sense.

\begin{theorem}
Suppose $(X_\nbf)_{\nbf\in\N_0^d}$ is a nondeterministic real-valued i.i.d. random field and $(\psi_\nbf)_{\nbf\in\N_0^d}$ are real coefficients. Furthermore suppose the random variables $(X_\nbf)_{\nbf\in\N_0^d}$ are symmetric or positive. If the multiple series $\sum_{\nbf\in\N_0^d} \psi_\nbf X_\nbf$ converges almost surely in the rectangular sense, then $\sum_{\nbf\in\N_0^d}\psi_\nbf^2<\infty$.

\label{lem3}
\end{theorem}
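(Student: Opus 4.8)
The plan is to apply Corollary \ref{thmklesov} to the independent random field $(\psi_\nbf X_\nbf)_{\nbf\in\N_0^d}$ and to convert the resulting convergence of the series $(A)$ and $(C)$ into square-summability of the coefficients. First I would reduce the positive case to the symmetric one. If the $X_\nbf$ are positive, pass to the symmetrization $X_\nbf^s:=X_\nbf-X_\nbf'$, where $(X_\nbf')_{\nbf}$ is an independent copy of $(X_\nbf)_{\nbf}$; then $(X_\nbf^s)_{\nbf}$ is a nondeterministic symmetric i.i.d. field (it is degenerate only if $X_{\boldsymbol 0}$ is), and since $\sum_\nbf\psi_\nbf X_\nbf$ and $\sum_\nbf\psi_\nbf X_\nbf'$ each converge almost surely in the rectangular sense, every admissible rectangular partial sum of the difference converges to the difference of the limits, so $\sum_\nbf\psi_\nbf X_\nbf^s$ converges almost surely in the rectangular sense as well. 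Hence it suffices to treat symmetric $(X_\nbf)_{\nbf}$. In that case $(\psi_\nbf X_\nbf)_{\nbf}$ is independent and symmetric, and Corollary \ref{thmklesov} turns the almost sure rectangular convergence into, for every $c>0$,
$$\sum_{\nbf\in\N_0^d}\PP(|\psi_\nbf X_\nbf|\ge c)<\infty \qquad\text{and}\qquad \sum_{\nbf\in\N_0^d}\var\big(\psi_\nbf X_\nbf\indi_{\{|\psi_\nbf X_\nbf|<c\}}\big)<\infty.$$

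The key step is an exact per-term identity. Writing $b_\nbf:=|\psi_\nbf|$ and $X:=X_{\boldsymbol 0}$, symmetry makes the truncated variable $\psi_\nbf X_\nbf\indi_{\{|\psi_\nbf X_\nbf|<c\}}$ symmetric, hence centred, so its variance equals its second moment, and a direct computation gives
$$\var\big(\psi_\nbf X_\nbf\indi_{\{|\psi_\nbf X_\nbf|<c\}}\big)+c^2\,\PP(|\psi_\nbf X_\nbf|\ge c)=b_\nbf^2\,\ew\big[\min(X^2,(c/b_\nbf)^2)\big]=b_\nbf^2\,h(c/b_\nbf),$$
where $h(a):=\ew[\min(X^2,a^2)]$. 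The function $h$ is nondecreasing, and $h(a)>0$ for every $a>0$ because $p:=\PP(X\neq0)>0$ (a nondeterministic symmetric variable is not almost surely constant, hence not almost surely zero). Adding the two convergent series above shows $\sum_\nbf b_\nbf^2\,h(c/b_\nbf)<\infty$.

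Then I would split the coefficients by a threshold. Since $\PP(|X|\ge c/b)\uparrow p$ as $b\to\infty$, fix $M$ with $\PP(|X|\ge c/M)\ge p/2$ and set $\kappa:=h(c/M)>0$. For the large coefficients, every $\nbf$ with $b_\nbf\ge M$ satisfies $c^2\,\PP(|\psi_\nbf X_\nbf|\ge c)=c^2\,\PP(|X|\ge c/b_\nbf)\ge c^2\,\PP(|X|\ge c/M)\ge c^2p/2$; since the series in $(A)$ converges, only finitely many such $\nbf$ exist, so their contribution to $\sum_\nbf b_\nbf^2$ is a finite sum of finite numbers. For each remaining index, $b_\nbf<M$ forces $h(c/b_\nbf)\ge h(c/M)=\kappa$, hence $b_\nbf^2\le\kappa^{-1}b_\nbf^2 h(c/b_\nbf)$, and summing over these indices gives $\sum_{b_\nbf<M}b_\nbf^2\le\kappa^{-1}\sum_\nbf b_\nbf^2 h(c/b_\nbf)<\infty$. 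Combining the two parts yields $\sum_\nbf\psi_\nbf^2<\infty$.

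I expect the main obstacle to be the lower bound on the per-term quantity. The naive estimate $\var(\cdot)\gtrsim b_\nbf^2$ is \emph{false}: for large $b_\nbf$ the truncation removes almost all the mass and $b_\nbf^2 h(c/b_\nbf)\to c^2 p$ stays bounded instead of growing like $b_\nbf^2$, so one cannot hope to dominate all of the $b_\nbf^2$ by the variances alone. The $\min$-identity, fed by the companion tail term $c^2\PP(|\psi_\nbf X_\nbf|\ge c)$ coming from $(A)$, is exactly what repairs this: it produces the clean bound $b_\nbf^2 h(c/b_\nbf)$ with $h$ bounded below on $(0,c/M]$, while the genuinely large coefficients are separately shown to be finite in number directly from $(A)$. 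The other point requiring care is the strict positivity $h(a)>0$, which is precisely where the nondeterministic hypothesis is used.
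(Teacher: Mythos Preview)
Your argument is correct. Both your proof and the paper's hinge on Klesov's three--series criterion (Corollary~\ref{thmklesov}), but the way the lower bound is extracted differs. The paper first deduces condition~\eqref{redu} from the three--series theorem (since in the symmetric or positive case rectangular convergence forces (A),(B),(C), which in turn yield~\eqref{redu} via Theorem~\ref{theoremklesov2}); from~\eqref{redu} it gets $\psi_{\nbf_k}\to 0$ along every sequence with $\max\to\infty$, and then argues by contradiction that $\var\bigl(X_\nbf\indi_{\{|\psi_\nbf X_\nbf|<1\}}\bigr)$ is eventually bounded away from zero, whence (C) alone gives $\sum\psi_\nbf^2<\infty$. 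You instead combine (A) and (C) through the exact identity $\var(\psi_\nbf X_\nbf\indi_{\{|\psi_\nbf X_\nbf|<c\}})+c^2\PP(|\psi_\nbf X_\nbf|\ge c)=b_\nbf^2\,h(c/b_\nbf)$, use (A) directly to prune the finitely many large $b_\nbf$, and then lower--bound $h$ on the remaining indices. A further organizational difference is that you reduce the positive case to the symmetric one by passing to $X_\nbf-X_\nbf'$, whereas the paper treats both cases in parallel through Klesov's results. Your route is a bit more explicit and avoids the detour through~\eqref{redu}; the paper's liminf argument is shorter once~\eqref{redu} is in hand. Both are valid, and the essential use of the nondeterminacy hypothesis---to ensure the relevant second--moment quantity is strictly positive---is the same in each.
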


\begin{proof}
Suppose $\sum_{\nbf\in\N_0^d} \psi_\nbf X_\nbf$ converges almost surely in the rectangular sense. Then Corollary \ref{thmklesov} and the remark preceding it imply that
$$\PP\left(\abs{\psi_{\nbf_k}X_{\nbf_k}}>\epsilon\right)\to 0 \quad \forall \epsilon >0, \quad (k\to\infty), $$
for all sequences $(\nbf_k)_{k\in\N}$ with $\max(n_{1k},\ldots,n_{dk})\to\infty$. By assumption $(X_\nbf)_{\nbf\in\N_0^d}$ is an i.i.d. and nondeterministic random field. This implies that some $\epsilon>0$ exists such that
$$\PP(\abs{X_\nbf}>\epsilon)=\PP(\abs{X_{\textbf{0}}}>\epsilon)>0,$$
and in particular $X_{\nbf_k}$ does not converge in probability to zero, if $\max(n_{1k},\ldots,n_{dk})\to\infty$ for $k\to\infty$. Hence we can conclude $\psi_{\nbf_k}\to0$ for all sequences $(\nbf_k)_{k\in\N}$ with $\max(n_{1k},\ldots,n_{dk})\to\infty$.
Defining $Y_\nbf:=X_\nbf \indi_{\{\abs{\psi_\nbf X_\nbf}<1\}}$ we can conclude by Theorem \ref{theoremklesov2}
\begin{equation}
\sum_{\nbf\in\N_0^d} \psi_\nbf^2 \var(Y_\nbf)<\infty\label{jetze}
\end{equation}
(if $\psi_\nbf=0$, then $\ew Y_\nbf$ or $\var (Y_\nbf)$ need not to be defined, in which case we interpret $\var( Y_\nbf)$ as being infinity and $\psi_\nbf\var( Y_\nbf)$ to be equal to zero). Next, we claim that
\begin{equation}
\liminf_{N\to\infty}\min_{\abs{\nbf}\ge N}\var(Y_\nbf) >0.\label{newish}
\end{equation}
If this were not true, there must be a subsequence $(\nbf_{k})_{k\in\N}\subset \N_0^d$ such that $\max(n_{1k},\ldots,n_{dk})\to\infty$ and $\var(Y_{\nbf_k})\to 0$ as $k\to\infty$.
Thus, $Y_{\nbf_k}-\ew(Y_{\nbf_k})\to 0 \ (k\to\infty)$ in $L^2(\PP)$, and hence $X_{\boldsymbol 0} \indi_{\{\abs{\psi_{\nbf_k} X_{\boldsymbol 0}}<1\}}-\ew X_{\boldsymbol 0} \indi_{\{\abs{\psi_{\nbf_k} X_{\boldsymbol 0}}<1\}}$, which is equal in distribution to $Y_{\nbf_k}-\ew(Y_{\nbf_k})$, converges in probability to zero as $k\to\infty$. But this implies that $X_0$ is deterministic and we have a contradiction. Hence \eqref{newish} is satisfied and together with \eqref{jetze} we obtain $\sum_{\kbf\in\N_0^d} \psi_\kbf^2<\infty$.
\end{proof}\\

With the aid of the preceding theorem, we are able to prove necessary conditions for the existence of causal solutions. 
\begin{theorem}
Assume that  $(Z_\tbf)_{\tbf\in\Z^d}$ is i.i.d. nondeterministic noise and $R,S\subset\N_0^d\backslash\{\boldsymbol 0\}$ and that the ARMA equation \eqref{eq1} admits a causal solution $(Y_\tbf)_{\tbf\in\Z^d}$. Then the following two conditions are satisfied:\\
(i)
\begin{equation*}
\frac{\Theta(\zbf)}{\Phi(\zbf)}\in H^2,\quad \zbf\in\D^d.
\end{equation*}
(ii) Let $(Y'_\tbf,Z'_\tbf)_{\tbf\in\Z^d}$ be an independent copy of $(Y_\tbf,Z_\tbf)_{\tbf\in\Z^d}$ and define the symmetrizations 
\begin{equation}
(\tilde Y_\tbf,\tilde Z_\tbf):=\left\{\begin{aligned}(Y_\tbf,Z_\tbf)&,\quad &\PP_{Y_\textbf{0}}\ \text{and}\ \PP_{Z_{\textbf{0}}} \ \text{symmetric},\\
(Y_\tbf-Y'_\tbf,Z_\tbf-Z'_\tbf)&,\quad & \ \text{otherwise}.
\end{aligned}\right.
\end{equation}
Let further
\begin{equation}
\frac{\Theta(\zbf)}{\Phi(\zbf)}=\sum_{\kbf\in\N_0^d}\alpha_\kbf \zbf^{\kbf},\quad \zbf\in\D^d,\label{alex1}
\end{equation}
denote the power series expansion of $\Theta (\zbf)/ \Phi(\zbf)$, then $(\tilde Y_\tbf)_{\tbf\in\Z^d}$ is a solution of the symmetrized ARMA equation 
\begin{equation*}
\Phi(\Bbf)\tilde Y_\tbf=\Theta(\Bbf)\tilde Z_\tbf,\quad \tbf\in\Z^d,
\end{equation*}
and given by
$$\tilde Y_\tbf=\sum_{\kbf\in\N_0^d}\alpha_\kbf \tilde Z_{\tbf-\kbf},\quad \tbf\in\Z^d,$$
where the convergence of the right-hand side is almost surely rectangular. In particular, if $\PP_{Z_{\textbf{0}}}$ is symmetric, then there is at most one symmetric causal solution.\label{theorem2}
\end{theorem}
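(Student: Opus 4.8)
The plan is to reduce to the symmetric situation, produce the moving-average representation by inverting $\Phi$ through its Neumann (geometric) series, and then read off both the $H^2$ membership and the uniqueness.

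First I would verify that the symmetrization is again a causal solution of a symmetric ARMA equation. Since \eqref{eq1} is linear, if $(Y_\tbf,Z_\tbf)$ and its independent copy $(Y'_\tbf,Z'_\tbf)$ both solve it, then so does their difference; in the already symmetric case there is nothing to do. The field $(\tilde Z_\tbf)_{\tbf\in\Z^d}$ is again i.i.d., symmetric, and nondeterministic (the difference of two independent copies of a nondeterministic variable is nondeterministic), $(\tilde Y_\tbf)_{\tbf\in\Z^d}$ is strictly stationary, and causality is inherited because $\tilde Y_\tbf$ is a function of $(Z_\sbf,Z'_\sbf:\sbf\le\tbf)$. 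The point of this reduction is that symmetry is exactly the hypothesis under which the auxiliary condition \eqref{redu} may be dropped, so that Corollary \ref{thmklesov} and Theorem \ref{lem3} become available.

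Next I would set up the representation. Because $R\subset\N_0^d\setminus\{\boldsymbol 0\}$ we have $\Phi(\boldsymbol 0)=1$, so writing $\Phi(\Bbf)=1-A(\Bbf)$ with $A(\Bbf)=\sum_{\nbf\in R}\phi_\nbf\Bbf^{\nbf}$ (no constant term) the symmetrized equation reads $\tilde Y_\tbf=\Theta(\Bbf)\tilde Z_\tbf+A(\Bbf)\tilde Y_\tbf$. Iterating $m$ times gives
\[\tilde Y_\tbf=\big[(1+A+\cdots+A^{m-1})\Theta\big](\Bbf)\,\tilde Z_\tbf+\big[A^m\tilde Y\big]_\tbf.\]
Since $A$ has no constant term, the coefficients of $(1+A+\cdots+A^{m-1})\Theta$ stabilise termwise to the formal power-series coefficients $\alpha_\kbf$ determined by $\Phi(\boldsymbol 0)=1$, i.e. those in \eqref{alex1}; call the first summand $S^{(m)}_\tbf=\sum_{\kbf}\alpha^{(m)}_\kbf\tilde Z_{\tbf-\kbf}$, a \emph{finite} sum of \emph{independent, symmetric} variables $\alpha_\kbf\tilde Z_{\tbf-\kbf}$ (the indices $\tbf-\kbf$ are pairwise distinct). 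The remainder $\big[A^m\tilde Y\big]_\tbf=\sum_{\jbf}[A^m]_\jbf\,\tilde Y_{\tbf-\jbf}$ is a finite combination of $\tilde Y$ at indices of total order $\abs{\jbf}\ge m$. The goal is to show $S^{(m)}_\tbf\to\tilde Y_\tbf$, which then identifies $\tilde Y_\tbf=\sum_{\kbf\in\N_0^d}\alpha_\kbf\tilde Z_{\tbf-\kbf}$ with almost surely rectangular convergence; here I would use that for a series of independent symmetric summands almost sure convergence, convergence in probability, and unconditional (hence rectangular) summation all coincide, so that it suffices to obtain convergence in probability along the iteration.

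The hard part, which I expect to be the main obstacle, is showing that the remainder $\big[A^m\tilde Y\big]_\tbf$ tends to $0$ in probability. A crude estimate fails, since $\sum_{\jbf}\abs{[A^m]_\jbf}\le(\sum_{\nbf\in R}\abs{\phi_\nbf})^m$ need not tend to $0$, and the far-past variables $\tilde Y_{\tbf-\jbf}$ are not independent of $S^{(m)}_\tbf$; the only a priori control is that, by strict stationarity, they are identically distributed and hence tight. I would therefore combine this tightness with symmetry and Klesov's rectangular three-series criterion (Theorem \ref{theoremklesov2}, via Corollary \ref{thmklesov}) to force the remainder to vanish, equivalently to force $S^{(m)}_\tbf$ to be tight and so convergent. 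Once $\tilde Y_\tbf=\sum_{\kbf\in\N_0^d}\alpha_\kbf\tilde Z_{\tbf-\kbf}$ is established, Theorem \ref{lem3}, applied to the real and imaginary parts of the symmetric i.i.d. field $(\tilde Z_\tbf)_{\tbf\in\Z^d}$, yields $\sum_{\kbf\in\N_0^d}\abs{\alpha_\kbf}^2<\infty$; by the characterisation of $H^2$ in the introduction the function $g(\zbf):=\sum_{\kbf}\alpha_\kbf\zbf^{\kbf}$ lies in $H^2$, and the relation $\Phi g=\Theta$ forced by the definition of the $\alpha_\kbf$ shows $g=\Theta/\Phi$, giving (i). Finally, for uniqueness when $\PP_{Z_{\boldsymbol 0}}$ is symmetric, any two symmetric causal solutions both equal $\sum_{\kbf}\alpha_\kbf Z_{\tbf-\kbf}$; equivalently their difference $D$ is causal with $\Phi(\Bbf)D=0$, and iterating the homogeneous recursion pushes $D_\tbf$ into the tail $\sigma$-field of $(Z_\sbf)_{\sbf\in\Z^d}$, which is trivial by Kolmogorov's zero-one law, whence $D\equiv0$.
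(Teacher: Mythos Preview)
Your overall architecture (symmetrize, iterate the recursion, show the partial sums converge, deduce $\sum|\alpha_\kbf|^2<\infty$, identify $\Theta/\Phi\in H^2$, then tail-triviality for uniqueness) matches the paper's. The genuine gap is exactly the step you flag as ``the hard part'': you do not supply a mechanism for showing the remainder $[A^m\tilde Y]_\tbf$ tends to $0$, and the tools you list (tightness of $\tilde Y$, symmetry, Klesov's criterion) do not assemble into an argument on their own. Klesov's criterion concerns series of independent summands whose convergence is already in question; it cannot be applied to the remainder, which is a finite combination of dependent $\tilde Y$'s, and it cannot be applied to $S^{(m)}$ until you first establish some form of tightness for $S^{(m)}$.

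The paper's proof closes this gap with two ideas you are missing. First, instead of iterating along the Neumann series (which clears a ``simplex'' $\{|\kbf|\le m\}$), it iterates until no $\tilde Y_{\tbf-\sbf}$ with $\sbf$ in a \emph{rectangle} $I_\Nbf=\{0\le k_i\le N_i\}$ remains, obtaining $\tilde Y_\tbf=A_{\tbf,\Nbf}+B_{\tbf,\Nbf}+C_{\tbf,\Nbf}$. The point of the rectangular region is that it makes $A_{\tbf,\Nbf}$ (built from $\tilde Z_{\tbf-\kbf}$, $\kbf\in I_\Nbf$) genuinely \emph{independent} of $B_{\tbf,\Nbf}+C_{\tbf,\Nbf}$ (built from $\tilde Z_{\tbf-\nbf}$ and $\tilde Y_{\tbf-\nbf}$ with $\nbf\notin I_\Nbf$, the latter being $\sigma(\tilde Z_\sbf:\sbf\le\tbf-\nbf)$-measurable by causality). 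Your simplex iteration does \emph{not} yield this independence: for instance with $R=\{(1,0),(0,1),(3,3)\}$ and $m=2$, $S^{(2)}_\tbf$ contains $\tilde Z_{\tbf-(3,3)}$ while the remainder contains $\tilde Y_{\tbf-(1,1)}$, which by causality may depend on $\tilde Z_{\tbf-(3,3)}$. Second, once independence is available, the paper uses a concrete symmetry argument (adapted from Vollenbr\"oker): if $|A_{\tbf,\Nbf}|$ were large with probability exceeding some threshold, then by symmetry of $A$ and independence from $B+C$ one shows $|\tilde Y_\tbf|=|A+B+C|$ would also be large with comparable probability, contradicting the tightness of $\tilde Y_\tbf$. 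This gives that $|A_{\tbf,\Nbf}|$ does not diverge in probability, and then Kallenberg's theorem on sums of independent variables forces $A_{\tbf,\Nbf}$ to converge almost surely; the remainder $B+C$ then converges, is tail-measurable, and hence vanishes by symmetry. Without the rectangular iteration and this symmetry/independence tightness argument, your proof is incomplete at precisely the decisive step.
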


\begin{proof}
Assume $(Y_\tbf)_{\tbf\in\Z^d}$ is a causal solution of the ARMA equation \eqref{eq1}. 
Then the symmetrizations $(\tilde Y_\tbf)_{\tbf\in\Z^d}$ and $(\tilde Z_\tbf)_{\tbf\in\Z^d}$ fulfill the equation
$$\Phi(\Bbf)\tilde Y_\tbf=\Theta(\Bbf)\tilde Z_\tbf,\quad \tbf\in\Z^d,$$
and obviously $(\tilde Y_\tbf)_{\tbf\in\Z^d}$ is a causal solution of this equation. 
Reorganizing this ARMA equation, we get
$$\tilde Y_\tbf= \sum_{\textbf{n}\in R}\phi_\nbf \tilde Y_{\tbf-\nbf}+\tilde Z_\tbf+ \sum_{\textbf{n}\in S}\theta_\kbf \tilde Z_{\tbf-\kbf}, \quad \tbf\in\Z^d.$$
Now we replace each $\tilde Y_{\tbf-\nbf}$ on the right side by the ARMA equation. We do so as long as none of the random variables $\tilde Y_{\tbf-\sbf}, \sbf\le(N_1,\ldots,N_d)\in\N^d$ remains on the right-hand side of this equation.  
Defining for $\Nbf=(N_1,\ldots,N_d)\in\N_0^d$ the index sets
$$ I_\Nbf=\{\kbf\in\N^d_0: 0\le k_i\le N_i, i=1,\ldots,d\}, \quad B_\Nbf=\{\kbf\in\N^d_0: \kbf=\mbf+\nbf, \ \mbf\in I_{\boldsymbol N},\ \nbf\in R\cup S \}\backslash I_{\boldsymbol N},  $$
we get for $\Nbf=(N_1,\ldots,N_d)\in\N_0^d$ an equation like the following
\begin{align}
\tilde Y_\tbf&=\sum_{\nbf\in I_\Nbf}\alpha_{\nbf,\Nbf} \tilde Z_{\tbf-\nbf}+\sum_{\nbf\in B_{\Nbf}}\beta_{\nbf,\Nbf}\tilde Z_{\tbf-\nbf}+\sum_{\nbf\in B_{\Nbf}}\gamma_{\nbf,\Nbf}\tilde Y_{\tbf-\nbf}\notag\\
&=: A_{\tbf,\Nbf}+B_{\tbf,\Nbf}+C_{\tbf,\Nbf},\quad \Nbf\in \N^d_0,\label{eq3}
\end{align}
where $(\alpha_{\nbf,\Nbf})_{\nbf\in I_\Nbf}$, $(\beta_{\nbf,\Nbf})_{\nbf\in B_{\Nbf}^S}$,$(\gamma_{\nbf,\Nbf})_{\nbf\in B_{\Nbf}^R}$ are some complex coefficients and $A_{\tbf,\Nbf}$, $B_{\tbf,\Nbf}$ and $C_{\tbf,\Nbf}$ denote the first, second and third sum, respectively. Using the causality of $(\tilde Y_\tbf)_{\tbf\in\Z^d}$, notice that
$$A_{\tbf,\Nbf} \ind B_{\tbf,\Nbf}, C_{\tbf,\Nbf}\qquad \forall \Nbf\in\N^d_0,\quad \tbf\in\Z^d,$$
and furthermore we observe that
\begin{equation}
\alpha_{\nbf,\Nbf} =\alpha_{\nbf,\Nbf'}\quad   \forall \Nbf'>\Nbf,\quad \nbf\in I_\Nbf.\label{eq133}
\end{equation}
Because of equation \eqref{eq133} we write from now on rather $\alpha_\nbf$ than $\alpha_{\nbf,\Nbf}$. In the following we want to show that $|A_{\tbf,\Nbf}|$ is not converging in probability to infinity as $\min(N_1,\ldots,N_d)\to\infty$ using a technique adapted from \cite{Vollenbroeker}. The sum $|A_{\tbf,\Nbf}|$ is not converging in probability to infinity, if we can find some constants $K,\epsilon>0$ such that for every sequence $(\Nbf_k)_{k\in\N}, \Nbf_k=(N_{1k},\ldots, N_{dk})\in\N^d,$
with $\min(N_{1k},\ldots, N_{dk})\to\infty$ as $k\to\infty$ the following holds
\begin{equation}
\PP(\abs{A_{\tbf,\Nbf_k}}<K)\ge \epsilon, \quad \forall k\in\N.\label{eq4}
\end{equation}
By stationarity of $(\tilde Y_\tbf)_{\tbf\in\Z^d}$ there are some constants $K>0$ and $\epsilon\in(0,\frac{1}{4})$ such that
\begin{equation*}
\PP(\abs{\tilde Y_{\tbf}}<K)\ge 1-\epsilon\quad \forall\tbf\in\Z^d.
\end{equation*}
Using equation \eqref{eq3} this implies for $\epsilon \in(0,\frac{1}{4})$
\begin{equation*}
\PP(\abs{A_{\tbf,\Nbf_k}+B_{\tbf,\Nbf_k}+C_{\tbf,\Nbf_k}}<K)\ge 1-\epsilon, \qquad \forall k\in\N.
\end{equation*}
Using the inequalities $\abs{\Re(z)},\abs{\Im(z)}\le \abs{z}, z\in\C$, it follows
\begin{align}
\PP(\abs{\Re(A_{\tbf,\Nbf_k}+B_{\tbf,\Nbf_k}+C_{\tbf,\Nbf_k})}<K)&\ge 1-\epsilon,\quad \text{and}\notag\\
\PP(\abs{\Im(A_{\tbf,\Nbf_k}+B_{\tbf,\Nbf_k}+C_{\tbf,\Nbf_k})}<K)&\ge 1-\epsilon,  \qquad \forall k\in\N .\label{eq5a}
\end{align}
We will now show that the following holds for $c\in(\frac{1}{2},1-2\epsilon)$
\begin{align}
\PP(\abs{\Re(A_{\tbf,\Nbf_k})}<K)\ge c, \quad \PP(\abs{\Im(A_{\tbf,\Nbf_k})}<K)\ge c,  \qquad \forall k\in\N .\label{eq5}
\end{align}
Suppose \eqref{eq5} is not true. Then we can find $k_1, k_2\in\N$ such that one of the following inequalities is true
\begin{align*}
\PP(\abs{\Re(A_{\tbf,\Nbf_{k_1}})}\ge K)\ge 1-c, \quad \PP(\abs{\Im(A_{\tbf,\Nbf_{k_2}})}\ge K)\ge 1-c.
\end{align*}
So by the symmetry of $A_{\tbf,\Nbf}$ we have 
\begin{align}
\PP(\Re(A_{\tbf,\Nbf_{k_1}})\ge K)&\ge \frac{1-c}{2},\quad &\PP(\Re(A_{\tbf,\Nbf_{k_1}})\le -K)\ge \frac{1-c}{2}, \quad \text{or}\ \tag{3.12a}\label{casea} \\
\PP(\Im(A_{\tbf,\Nbf_{k_2}})\ge K)&\ge \frac{1-c}{2},\quad &\PP(\Im(A_{\tbf,\Nbf_{k_2}})\le -K)\ge \frac{1-c}{2},\quad\quad \ \tag{3.12b}\label{caseb}
\end{align}  \stepcounter{equation}
and obviously
\begin{equation*}
\PP(\Re(B_{\tbf,\Nbf_{k_1}}+C_{\tbf,\Nbf_{k_1}})\le 0)\ge\frac{1}{2} \quad \text{or} \quad \PP(\Re(B_{\tbf,\Nbf_{k_1}}+C_{\tbf,\Nbf_{k_1}})\ge 0)\ge\frac{1}{2}.
\end{equation*}
Suppose without losing the generaltity $\PP(\Re(B_{\tbf,\Nbf_{k_1}}+C_{\tbf,\Nbf_{k_1}})\le 0)\ge\frac{1}{2}$. By symmetry of $\Re(\tilde Y_{\tbf})=\Re(A_{\tbf,\Nbf_{k_1}}+B_{\tbf,\Nbf_{k_1}}+C_{\tbf,\Nbf_{k_1}})$ and independence of $A_{\tbf,\Nbf_{k_1}}$ and $B_{\tbf,\Nbf_{k_1}}+C_{\tbf,\Nbf_{k_1}}$ it follows in case \eqref{casea}
\begin{align}
\PP(\abs{\Re(A_{\tbf,\Nbf_{k_1}})+\Re(B_{\tbf,\Nbf_{k_1}}+C_{\tbf,\Nbf_{k_1}})}\ge K)&=2\PP(\Re(A_{\tbf,\Nbf_{k_1}})+\Re(B_{\tbf,\Nbf_{k_1}}+C_{\tbf,\Nbf_{k_1}})\le -K)\notag\\
&\ge 2\PP(\Re(A_{\tbf,\Nbf_{k_1}})\le -K, \Re(B_{\tbf,\Nbf_{k_1}}+C_{\tbf,\Nbf_{k_1}})\le 0)\notag\\
&=2\PP(\Re(A_{\tbf,\Nbf_{k_1}})\le -K)\ \PP(\Re(B_{\tbf,\Nbf_{k_1}}+C_{\tbf,\Nbf_{k_1}})\le 0)\notag\\
&\ge \frac{1-c}{2}>\epsilon.\label{eq11}
\end{align}
Similarly, in case \eqref{caseb} we obtain
\begin{eqnarray}
\PP(\abs{\Im(A_{\tbf,\Nbf_{k_2}})+\Im(B_{\tbf,\Nbf_{k_2}}+C_{\tbf,\Nbf_{k_2}})}\ge K)\ge  \frac{1-c}{2}>\epsilon.\label{eq12}
\end{eqnarray}
The equations \eqref{eq11} and \eqref{eq12} provide a contradiction to equation \eqref{eq5a}. Hence equation \eqref{eq5} is fulfilled and equation \eqref{eq4} follows easily (with possibly different constants $\epsilon, K$). In particular we showed that $\abs{\sum_{\nbf\in I_{\Nbf_k}}\alpha_\nbf \tilde Z_{\tbf-\nbf}}$ does not converge in probability to infinity as $k\to\infty$. Thus, by Theorem 3.17 of Kallenberg \cite{Kallenberg} we can conclude that $\sum_{\nbf\in I_{\Nbf_k}}\alpha_\nbf \tilde Z_{\tbf-\nbf}$ converges almost surely. Furthermore we notice by equation \eqref{eq3} that $B_{\tbf,\Nbf_{k}}+C_{\tbf,\Nbf_{k}}$ converges also almost surely as $k\to\infty$, and it is measurable with respect to $\sigma(\tilde  Z_\sbf, \sbf\le \tbf-\Nbf_l)$, $\forall l \in\N$. 
Hence we can deduce that the limit of $B_{\tbf,\Nbf_k}+C_{\tbf,\Nbf_k}$ is measurable with respect to the tail $\sigma$-field
$$	\bigcap_{k\in\N} \sigma(\tilde Z_\sbf, \sbf\le \tbf-\Nbf_k).$$
By Kolmogorov's zero-one law this  $\sigma$-field is $\PP$-trivial. Hence the limit of $B_{\tbf,\Nbf_k}+C_{\tbf,\Nbf_k}$ is almost surely constant, which we denote by $u$ for the moment. Altogether we have
$$\tilde Y_\tbf= u+\lim_{k\to\infty }\sum_{\nbf\in I_{\Nbf_k}}\alpha_\nbf\tilde Z_{\tbf-\nbf}\quad \text{a.s,}$$
for every sequence $(\Nbf_k)_{k\in\N}, \Nbf_k=(N_{1k},\ldots, N_{dk})\in\N^d,$ with $\min(N_{1k},\ldots, N_{dk})\to\infty$ as $k\to\infty$. By symmetry of $(\tilde Y_\tbf)_{\tbf\in\Z^d}$ and $(\tilde Z_\tbf)_{\tbf\in\Z^d}$, we must have $u=0$ almost surely. Further, $\sum_{\nbf\in \N_0^d}\alpha_\nbf \tilde Z_{\tbf-\nbf}$ converges almost surely in the rectangular sense. Applying Theorem \ref{lem3} it follows that 
$$\sum_{\nbf\in \N^d_0}\abs{\alpha_\nbf}^2<\infty.$$

The proof is finished if we can show that $\Theta(\zbf)/\Phi(\zbf)\in H^2$ and the power series expansion of this function is given by
\begin{equation}
\frac{\Theta(\zbf)}{\Phi(\zbf)}=\sum_{\kbf\in\N_0^d}\alpha_\kbf \zbf^{\kbf},\quad \zbf\in\D^d.\label{last}
\end{equation}
To do so, we apply the operator $\Phi(\Bbf)$ to both sides of equation \eqref{eq3} and get by \eqref{eq1}
\begin{align*}
\Phi(\Bbf)\tilde Y_\tbf&=\sum_{\nbf\in I_\Nbf}\alpha_\nbf \Phi(\Bbf)\tilde Z_{\tbf-\nbf}+\sum_{\nbf\in B_{\Nbf}}\beta_{\nbf,\Nbf} \Phi(\Bbf)\tilde Z_{\tbf-\nbf}+\sum_{\nbf\in B_{\Nbf}}\gamma_{\nbf,\Nbf} \Theta(\Bbf) \tilde Z_{\tbf-\nbf}\notag\\
&=\Theta(\Bbf) \tilde Z_\tbf, \quad\Nbf=(N_1,\ldots, N_d)\in\N_0^d.
\end{align*}
Because of the independence of $(\tilde Z_\tbf)_{\tbf\in\Z^d}$ and the assumption that it is nondeterministic we can conclude that the coefficients of both sides of this equation are equal. Thus for each $\Nbf=(N_1,\ldots,N_d)$ with $I_{\boldsymbol N}\supset S$ we have $\alpha_{\textbf{0}}=1$ and
\begin{equation*}
\alpha_\kbf-\sum_{\nbf\in R,\\ \nbf \le \kbf} \phi_{\nbf}\alpha_{\kbf-\nbf}=\left\{
\begin{aligned}
\theta_\kbf\quad, &\ \kbf\in S,\\
0\quad, &\ \kbf\in I_\Nbf\text{ \textbackslash} (S\cup \{\textbf{0}\}).
\end{aligned}
\right.
\end{equation*}
Hence we observe for all $\tbf\in\td$
\begin{align}
\left(1-\sum_{\nbf\in R}\phi_\nbf e^{-i\nbf\tbf}\right)\left(\sum_{\kbf\in\N_0^d} \alpha_\kbf e^{-i\kbf\tbf} \right)
=&\sum_{\kbf\in\N_0^d} \left(\alpha_\kbf -\sum_{\nbf\in R, \nbf\le \kbf}\phi_\nbf \alpha_{\kbf-\nbf} \right)  e^{-i\kbf\tbf}\notag\\
=&1+ \sum_{\nbf\in S} \theta_\nbf  e^{-i\nbf \tbf}=\Theta(e^{-i\tbf}).\label{gll}
\end{align}
The zero set of $\Phi(e^{-i\cdot})$ is a null set of the $d$-dimensional Lebesgue measure. Because of this and equation \eqref{gll}, we conclude $\Theta(e^{-i\cdot})/\Phi(e^{-i\cdot})\in H^2 \subset L^2(\td)$ and that is equation \eqref{last}. 
 \end{proof}\bigbreak
In the case that $\PP_{Z_{\textbf{0}}}$ is symmetric, we can now give the following necessary and sufficient condition for the existence of symmetric causal solutions:
\begin{corollar}
Assume $(Z_\tbf)_{\tbf\in\Z^d}$ is i.i.d. nondeterministic symmetric noise and $R,S\subset \N_0^d\backslash\{\boldsymbol 0\}$. Then the ARMA equation \eqref{eq1} admits a causal solution $(Y_\tbf)_{\tbf\in\Z^d}$ if and only if
\begin{equation*}
\frac{\Theta(\zbf)}{\Phi(\zbf)}\in H^2,\quad \zbf\in\D^d,
\end{equation*}
and $\sum_{\kbf\in\N_0^d} \alpha_\kbf Z_{\tbf-\kbf}$ converges almost surely in the rectangular sense, where $(\alpha_\kbf)_{\kbf\in\N_0^d}$ is given by \eqref{alex1}. In that case, $Y_\tbf:=\sum_{\kbf\in\N_0^d} \alpha_\kbf Z_{\tbf-\kbf}$ defines the unique symmetric causal solution.
\end{corollar}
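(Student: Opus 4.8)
The plan is to derive both implications by specializing Theorem \ref{theorem2} to symmetric noise and then upgrading its conclusions, which are phrased in terms of the symmetrization $\tilde Z$, to statements about $Z$ itself; the symmetry of $Z$ is exactly what permits this upgrade. For necessity, suppose a causal solution $(Y_\tbf)_{\tbf\in\Z^d}$ exists. Theorem \ref{theorem2}(i) already gives $\Theta/\Phi\in H^2$, and Theorem \ref{theorem2}(ii) gives that $\sum_{\kbf\in\N_0^d}\alpha_\kbf\tilde Z_{\tbf-\kbf}$ converges almost surely in the rectangular sense. If $\PP_{Y_{\boldsymbol 0}}$ happens to be symmetric, then $\tilde Z=Z$ and the second required condition is immediate. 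If not, then $\tilde Z_\tbf=Z_\tbf-Z'_\tbf$, and I would transfer convergence to $\sum\alpha_\kbf Z_{\tbf-\kbf}$ through Corollary \ref{thmklesov}: for symmetric summands rectangular convergence is equivalent to the two series (A) and (C), and since $Z$ is symmetric the standard symmetrization inequalities $\PP(|Z|\ge t)\le 2\PP(|Z-Z'|\ge t)$ and $\PP(|Z-Z'|\ge t)\le 2\PP(|Z|\ge t/2)$, together with the companion comparison of truncated variances, show that (A) and (C) hold for $(\alpha_\kbf(Z-Z')_{\tbf-\kbf})$ iff they hold for $(\alpha_\kbf Z_{\tbf-\kbf})$.

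For sufficiency, assuming both conditions I set $Y_\tbf=\sum_{\kbf\in\N_0^d}\alpha_\kbf Z_{\tbf-\kbf}$. This converges by hypothesis, is causal because the expansion \eqref{alex1} is supported on $\N_0^d$ so only $Z_\sbf$ with $\sbf\le\tbf$ enter, and is strictly stationary as a fixed measurable functional of the i.i.d.\ field applied at each shift. The only substantive point is to verify $\Phi(\Bbf)Y_\tbf=\Theta(\Bbf)Z_\tbf$. I would write $\Phi(\Bbf)Y_\tbf$ as the finite combination $Y_\tbf-\sum_{\nbf\in R}\phi_\nbf Y_{\tbf-\nbf}$, introduce the box partial sums $P_\Nbf=\sum_{\kbf\in I_\Nbf}\alpha_\kbf Z_{\tbf-\kbf}$ over $I_\Nbf=\{\kbf:0\le k_i\le N_i\}$, and expand $\Phi(\Bbf)P_\Nbf$. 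Grouping the coefficient of each $Z_{\tbf-\jbf}$ and invoking the identities behind \eqref{last}, namely that $\alpha_\kbf-\sum_{\nbf\in R,\nbf\le\kbf}\phi_\nbf\alpha_{\kbf-\nbf}$ equals $1$, $\theta_\kbf$, $0$ according as $\kbf=\boldsymbol 0$, $\kbf\in S$, or otherwise, the interior contribution stabilizes to the finite sum $\Theta(\Bbf)Z_\tbf$ once $\Nbf$ is large enough that $\{\boldsymbol 0\}\cup S$ lies in the interior, leaving only a boundary remainder $R_\Nbf$. Since $\Phi(\Bbf)P_\Nbf\to\Phi(\Bbf)Y_\tbf$ along any rectangular sequence, the quantity $D_\tbf:=\Phi(\Bbf)Y_\tbf-\Theta(\Bbf)Z_\tbf=\lim_\Nbf R_\Nbf$ exists almost surely.

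To finish I would show $D_\tbf=0$ by the same mechanism used for the constant $u$ in Theorem \ref{theorem2}. Evaluating along the diagonal $\Nbf_k=(k,\dots,k)$, each $R_{\Nbf_k}$ involves only noise variables $Z_\sbf$ with $\max_i(t_i-s_i)\ge k$, so $D_\tbf$ is measurable with respect to the tail field $\bigcap_k\sigma(Z_\sbf:\max_i(t_i-s_i)\ge k)$; this field is $\PP$-trivial, since any event in it is independent of $\sigma(Z_\sbf:\sbf\in F)$ for every finite $F$ and hence independent of itself, so $D_\tbf$ is almost surely constant. The symmetry of $Z$ then forces this constant to vanish: the law-preserving map $(Z_\sbf)\mapsto(-Z_\sbf)$ sends $D_\tbf$ to $-D_\tbf$, so $D_\tbf\stackrel{d}{=}-D_\tbf$, and an almost surely constant symmetric variable is $0$. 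Thus $\Phi(\Bbf)Y_\tbf=\Theta(\Bbf)Z_\tbf$ and $(Y_\tbf)_{\tbf\in\Z^d}$ is a causal solution. The same map shows $Y_\tbf\stackrel{d}{=}-Y_\tbf$, so the constructed solution is symmetric; uniqueness among symmetric causal solutions is the final assertion of Theorem \ref{theorem2}, whence $Y$ is the unique symmetric causal solution.

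I expect the main obstacle to be precisely this verification that $Y$ solves the recursion under merely rectangular (rather than absolute) convergence: applying $\Phi(\Bbf)$ term by term leaves a boundary remainder that does not obviously vanish, and the crux is to recognize that its limit is tail-measurable, hence almost surely constant, and then to exploit symmetry to annihilate it. The necessity direction is comparatively routine, its only delicate feature being the symmetrization-inequality transfer in the case where the given solution is not itself symmetric.
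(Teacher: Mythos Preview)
Your proof is correct, but the sufficiency half takes a noticeably heavier route than the paper's. The paper simply says ``sufficiency follows as in the proof of Theorem~\ref{thm1}'', meaning the coefficient comparison carries over with only a minor adaptation to rectangular limits. Concretely, with $P_{\Nbf}(\sbf):=\sum_{\kbf\in I_{\Nbf}}\alpha_\kbf Z_{\sbf-\kbf}$ one checks that for $\Nbf$ large
\[
\Phi(\Bbf)P_{\Nbf}(\tbf)-\Theta(\Bbf)Z_\tbf \;=\; -\sum_{\nbf\in R}\phi_\nbf\bigl[P_{\Nbf}(\tbf-\nbf)-P_{\Nbf-\nbf}(\tbf-\nbf)\bigr],
\]
and each bracket is a difference of two box partial sums of the \emph{same} series $Y_{\tbf-\nbf}$, both tending to $Y_{\tbf-\nbf}$ by rectangular convergence. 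Hence the remainder vanishes directly---no tail-$\sigma$-field argument and no appeal to symmetry are needed. Your Kolmogorov-type argument is valid (the independence-from-every-finite-$F$ reasoning is a clean way to get triviality of that nonstandard tail field), but it is machinery you can dispense with here.

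On the necessity side you are actually \emph{more} careful than the paper. The paper's one-line ``Necessity has been shown in Theorem~\ref{theorem2}'' glosses over the case where the given causal solution $Y$ is not itself symmetric: in that case Theorem~\ref{theorem2} only delivers rectangular convergence of $\sum_\kbf\alpha_\kbf(Z-Z')_{\tbf-\kbf}$, and one still has to transfer this to $\sum_\kbf\alpha_\kbf Z_{\tbf-\kbf}$. Your route via Corollary~\ref{thmklesov} and the standard symmetrization inequalities for (A) and (C) does exactly this; the paper leaves that step implicit.
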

\begin{proof}
Necessity has been shown in Theorem \ref{theorem2}, and sufficiency follows as in the proof of Theorem \ref{thm1}.
\end{proof}\bigbreak
In the following theorem we give sufficient conditions for the existence of a causal solution. Notice that under condition $(i)$ the convergence is even almost surely absolutely.

\begin{theorem}
A causal solution of the ARMA equation exists, if one of the following conditions is fulfilled:
\begin{eqnarray*}
&(i)&\quad\ew \log_+^d \abs{Z_{\textbf{0}}}<\infty  \quad \mbox{and}\quad \Phi(\zbf)\neq 0 \quad \forall \zbf\in\overline\D^d.\\
&(ii)&\quad\ew \abs{Z_{\textbf{0}}}^2<\infty, \ \ew Z_{\textbf{0}}=0 \quad \mbox{and}\quad \frac{\Theta(\zbf)}{\Phi(\zbf)}\in H^2.
\end{eqnarray*}
In both cases the quotient $\Theta(\zbf)/\Phi(\zbf)$ admits a power series expansion, given by
\begin{equation*}
\frac{\Theta(\zbf)}{\Phi(\zbf)}=\sum_{\kbf\in\N^d_0} \psi_\kbf \zbf^\kbf, \quad \zbf=(z_1,\ldots, z_d) \in \D^d,
\end{equation*}
and a causal solution is given by
\begin{equation}
Y_\tbf=\sum_{\kbf\in\N^d_0}\psi_\kbf Z_{\tbf-\kbf}, \quad\tbf\in\Z^d.\label{solli}
\end{equation}\label{propwrong}
\end{theorem}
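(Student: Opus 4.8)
The plan is to handle the two cases in parallel once, in each, we have secured the one-sided power series expansion $\Theta(\zbf)/\Phi(\zbf)=\sum_{\kbf\in\N_0^d}\psi_\kbf\zbf^\kbf$ on $\D^d$ and have shown that the candidate field $Y_\tbf=\sum_{\kbf\in\N_0^d}\psi_\kbf Z_{\tbf-\kbf}$ converges (almost surely absolutely under $(i)$, almost surely rectangularly under $(ii)$). Granting convergence, both cases finish in the same way: comparing coefficients exactly as in the sufficiency part of Theorem \ref{thm1} yields $\Phi(\Bbf)Y_\tbf=\Theta(\Bbf)Z_\tbf$, and since every summation index $\kbf$ lies in $\N_0^d$ we have $\tbf-\kbf\le\tbf$, so $Y_\tbf$ is $\sigma(Z_\sbf:\sbf\le\tbf)$-measurable and $(Y_\tbf)_{\tbf\in\Z^d}$ is a causal solution.

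For case $(i)$ I would argue that $\Phi(\zbf)\neq0$ on the compact polydisc $\overline\D^d$ forces $\Phi$ to stay bounded away from zero on a slightly larger polydisc, so that $\Theta/\Phi$ is holomorphic on $\{\abs{z_i}<\rho,\ i=1,\ldots,d\}$ for some $\rho>1$; its expansion is then a genuine power series supported on $\N_0^d$, which is precisely what the closed-polydisc hypothesis buys us over the torus hypothesis of Proposition \ref{prop111}. The Cauchy estimates in $d$ variables give $\abs{\psi_\kbf}\le M e^{-c(k_1+\ldots+k_d)}$ for constants $M,c>0$. From here the argument is the Borel--Cantelli computation of Proposition \ref{prop111} restricted to $\N_0^d$: using Lemma \ref{lem1} to count the indices with $k_1+\ldots+k_d=n$ together with the hypothesis $\ew\log_+^d\abs{Z_{\textbf{0}}}<\infty$, one shows $\sum_{\kbf\in\N_0^d}\PP(\abs{\psi_\kbf Z_{\tbf-\kbf}}>e^{-c'(k_1+\ldots+k_d)})<\infty$ for $c'\in(0,c)$, so that $\abs{\psi_\kbf Z_{\tbf-\kbf}}$ exceeds the summable majorant $e^{-c'(k_1+\ldots+k_d)}$ only finitely often, giving almost sure absolute convergence.

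For case $(ii)$, membership $\Theta/\Phi\in H^2$ gives $\sum_{\kbf\in\N_0^d}\abs{\psi_\kbf}^2<\infty$. Since the noise is complex and need not be symmetric, I would invoke Klesov's full three-series theorem (Theorem \ref{theoremklesov2}) and treat the real and imaginary parts separately. Writing $\psi_\kbf=a_\kbf+ib_\kbf$ and $Z=Z^{(1)}+iZ^{(2)}$, the real part of $Y_\tbf$ is $\sum_{\kbf\in\N_0^d}W_\kbf$ with $W_\kbf=a_\kbf Z^{(1)}_{\tbf-\kbf}-b_\kbf Z^{(2)}_{\tbf-\kbf}$, an independent family of real, centred (as $\ew Z_{\textbf{0}}=0$) random variables with $\sum_\kbf\var(W_\kbf)\le C\sum_\kbf\abs{\psi_\kbf}^2<\infty$ for a constant $C$ depending only on the second moments of $Z_{\textbf{0}}$. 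For the truncations $W_\kbf^c=W_\kbf\indi_{\{\abs{W_\kbf}<c\}}$, Chebyshev's inequality gives series $(A)$, the bound $\var(W_\kbf^c)\le\var(W_\kbf)$ gives series $(C)$, and the centring identity $\abs{\ew W_\kbf^c}=\abs{\ew(W_\kbf\indi_{\{\abs{W_\kbf}\ge c\}})}\le c^{-1}\var(W_\kbf)$ gives absolute, hence rectangular, convergence of series $(B)$; Theorem \ref{theoremklesov2} then yields almost sure rectangular convergence of $\sum_\kbf W_\kbf$. The same estimates apply to the imaginary part, and combining the two gives almost sure rectangular convergence of $\sum_{\kbf\in\N_0^d}\psi_\kbf Z_{\tbf-\kbf}$.

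I expect the main obstacle to be the bookkeeping in case $(ii)$: verifying the three Klesov series for the genuinely complex, not-necessarily-symmetric noise through the real/imaginary split, and then justifying the coefficient comparison $\Phi(\Bbf)Y_\tbf=\Theta(\Bbf)Z_\tbf$ when $\sum_{\kbf}\psi_\kbf Z_{\tbf-\kbf}$ converges only rectangularly rather than absolutely. For the latter the key point is that the coefficients $\xi_\kbf=\psi_\kbf-\sum_{\nbf\in R}\phi_\nbf\psi_{\kbf-\nbf}$ of Theorem \ref{thm1} are supported on the finite set $\{\textbf{0}\}\cup S$, so that over the rectangle $I_\Nbf=\{\kbf\in\N_0^d:0\le k_i\le N_i,\ i=1,\ldots,d\}$ the sum $\sum_{\kbf\in I_\Nbf}\xi_\kbf Z_{\tbf-\kbf}$ is eventually the finite expression $\Theta(\Bbf)Z_\tbf$; reindexing each inner sum $\sum_{\kbf\in I_\Nbf}\psi_{\kbf-\nbf}Z_{\tbf-\kbf}$ as a partial sum of $Y_{\tbf-\nbf}$ over the shifted rectangle $I_{\Nbf-\nbf}$, which still exhausts $\N_0^d$ rectangularly as $\min_i N_i\to\infty$, shows that these same partial sums also tend to $\Phi(\Bbf)Y_\tbf$, and the identity follows.
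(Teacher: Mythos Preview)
Your argument is correct. Case $(i)$ is essentially the paper's proof: it too observes that the closed-polydisc hypothesis yields a genuine power series on a slightly larger polydisc and then invokes the Borel--Cantelli computation of Proposition \ref{prop111} verbatim.

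For case $(ii)$ you take a different and more elaborate route than the paper. The paper simply remarks that $\ew Z_{\textbf{0}}=0$, $\ew\abs{Z_{\textbf{0}}}^2<\infty$ and $\sum_{\kbf}\abs{\psi_\kbf}^2<\infty$ make the summands $\psi_\kbf Z_{\tbf-\kbf}$ orthogonal in $L^2(\PP)$ with square-summable norms, so the series converges in $L^2(\PP)$; that already yields a $\sigma(Z_\sbf:\sbf\le\tbf)$-measurable, strictly stationary limit, and the ARMA identity follows by continuity of the finite linear operator $\Phi(\Bbf)$ on $L^2(\PP)$. Your path through Klesov's three-series theorem---splitting into real and imaginary parts and verifying series $(A)$, $(B)$, $(C)$ via Chebyshev and the centring identity---is sound and actually delivers more, namely almost sure rectangular convergence rather than merely $L^2$, at the cost of additional bookkeeping. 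Your explicit justification of the coefficient comparison under rectangular convergence (reindexing over shifted rectangles $I_{\Nbf-\nbf}$ and using that the $\xi_\kbf$ have finite support) is likewise correct and considerably more detailed than the paper's ``easy to see''.
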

\begin{proof}
In case $(i)$ by the condition that $\Phi(\zbf)$ has no zero on $\overline\D^d$ the existence of a multidimensional power series expansion is assured. The remaining proof is almost the same as the proof of Proposition \ref{prop111}. The assumptions in case $(ii)$ assure $L^2(\PP)$-convergence of (\ref{solli}). Furthermore it is easy to see that \eqref{solli} solves the ARMA equations in both cases.
\end{proof}\bigbreak

Having derived necessary conditions and sufficient conditions, we want to discuss the crucial condition that the quotient of the ARMA polynomials lies in $H^2$. In the time series model ($d=1$), if $\Phi(z)$ and $\Theta(z)$ have no common zeros, a necessary and sufficient condition for the existence of strictly stationary solution is $\Phi(z)\neq 0$ for $\abs{z}=1$ and $\ew \log_+ Z_0<\infty$ (if $\Phi$ is not constant), see Brockwell and Lindner \cite{Brockwelllindner}.  We will see that in contrast to the time series model for $d>1$ it is not necessary that the analog condition $\Phi(e^{-i\tbf})\neq 0$ for $\tbf\in \T^d$ holds, cf. the upcoming Example \ref{contrast}.\bigbreak
A polynomial in two or more variables can in general not be factored as in one variable. If a polynomial $p(x_1,\ldots,x_n)$ admits a factorization $p=qr$, where $q$ and $r$ are nonconstant polynomials of $n$ or less variables, then it is called {\it reducible}, otherwise {\it irreducible}. Every polynomial of several variables admits a factorization into irreducible factors, which is essentially, except for multiplication with constants, unique, cf. Bôcher \cite{Bocher}, Chapter 16. If this factorization consists of only one nonconstant irreducible factor, then the polynomial is irreducible.\\
The following result will be useful to exclude zeros of $\Phi$ on the closed unit disc if $d=2$ and $\Theta\equiv 1$, cf. Corollary \ref{2dim}.
\begin{theorem}
Suppose $\Phi:\C^d\to\C$ is an irreducible polynomial in $d\ge 2$ variables and further, if arbitrary $d-1$ variables are fixed, the polynomial in the remaining variable is not identically zero. If $\Phi$ has a root $t^{(0)}=(t_1^{(0)},\ldots,t_d^{(0)})\in(\partial\mathbb{D}^d)\backslash \td$, i.e. $\Phi(t^{(0)})=0$, then $\Phi$ has also roots inside the open unit polydisc $\mathbb{D}^d$.\label{zero}
\end{theorem}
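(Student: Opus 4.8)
The plan is to produce an interior zero by a one-variable perturbation argument, where the crucial idea is to solve for a coordinate that is \emph{already strictly inside} the disc at the given boundary root, so that the zero cannot escape to the boundary under perturbation.

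First I would relabel the coordinates so that the root $t^{(0)}=(t_1^{(0)},\ldots,t_d^{(0)})$ has $|t_d^{(0)}|<1$; this is possible because $t^{(0)}\in(\partial\D^d)\setminus\td$ forces at least one coordinate to have modulus strictly less than one, while $t^{(0)}\in\partial\D^d\subset\overline{\D}^d$ gives $|t_i^{(0)}|\le 1$ for every $i$. Writing $w'=(t_1^{(0)},\ldots,t_{d-1}^{(0)})\in\overline{\D}^{d-1}$, I would regard $\Phi$ as a polynomial in its last variable with the first $d-1$ as parameters, $f(\zeta):=\Phi(w',\zeta)$. By the standing hypothesis that fixing any $d-1$ variables leaves a polynomial that does not vanish identically, $f\not\equiv 0$; and $f(t_d^{(0)})=\Phi(t^{(0)})=0$, so $t_d^{(0)}$ is a genuine zero of $f$ lying in the open unit disc.

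Next I would approach $w'$ from inside the polydisc by setting $w'_k:=\rho_k w'$ with $\rho_k\uparrow 1$, $0<\rho_k<1$, so that each coordinate of $w'_k$ has modulus $\rho_k|t_i^{(0)}|\le\rho_k<1$, giving $w'_k\in\D^{d-1}$ and $w'_k\to w'$. Put $f_k(\zeta):=\Phi(w'_k,\zeta)$. Since the coefficients of $\Phi$, viewed as a polynomial in the last variable, are polynomials in the first $d-1$ variables, they are continuous, so $f_k\to f$ coefficientwise, hence uniformly on every compact subset of $\C$. I would then invoke Hurwitz's theorem: choosing $\epsilon$ with $0<\epsilon<1-|t_d^{(0)}|$, the fact that $f\not\equiv 0$ has a zero at $t_d^{(0)}$ implies that for all large $k$ the function $f_k$ has a zero $\zeta_k$ with $|\zeta_k-t_d^{(0)}|<\epsilon$, whence $|\zeta_k|<1$. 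Consequently $(w'_k,\zeta_k)\in\D^d$ and $\Phi(w'_k,\zeta_k)=f_k(\zeta_k)=0$, exhibiting a zero of $\Phi$ inside the open polydisc.

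I expect the only delicate point to be the persistence of the root under perturbation. One cannot argue globally via ``continuity of all the roots,'' because the leading coefficient of $f$ in $\zeta$ may vanish at the boundary point $w'$, so that some roots run off to infinity; this is precisely why the argument must be localized near the interior zero $t_d^{(0)}$, where Hurwitz's theorem supplies a nearby zero of $f_k$ regardless of any degree collapse or of the behavior of the other roots. I would also remark that the nondegeneracy hypothesis (no identically vanishing slice) enters only to guarantee $f\not\equiv 0$, so that irreducibility of $\Phi$ is not strictly needed for this implication, even though it is the natural setting for the intended application.
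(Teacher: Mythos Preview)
Your argument is correct and in fact cleaner than the paper's. The paper proceeds inductively: it singles out one coordinate with modulus $1$ and one with modulus $<1$, freezes the remaining $d-2$ variables, and pushes the modulus-$1$ coordinate strictly inside while tracking a root in the interior coordinate via continuous dependence of polynomial roots on coefficients; it then repeats this for each coordinate on the torus. You instead push all of the first $d-1$ coordinates inward simultaneously by the radial scaling $w'_k=\rho_k w'$ and track a single zero in the last variable with Hurwitz's theorem. Your route avoids the induction and, more importantly, is robust against degree collapse: the paper has to split into cases according to whether the leading coefficient $a_n(t_j^{(0)})$ vanishes, and its ``same argument as before'' in the degenerate case is precisely the point where some roots escape to infinity, whereas Hurwitz localizes near the interior zero $t_d^{(0)}$ and is indifferent to what the other roots do. Your closing remark that irreducibility is not actually used is also accurate; only the nondegeneracy of one-variable slices enters.
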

\begin{proof}
Suppose $t^{(0)}=(t_1^{(0)},\ldots,t_d^{(0)})\in(\partial\mathbb{D}^d)\backslash \td$ is a root of $\Phi$. Then for at least one $i\in\{1,\ldots,d\}$ we have $t_i^{(0)}\in\D$, and for at least one $i\in\{1,\ldots,d\}$ we have $t_i^{(0)}\in\T$. Define
\begin{eqnarray*}
I:=\left\{i\in\{1,\ldots,d\}: t_i^{(0)}\in\T \right\},\quad N:=\left\{i\in\{1,\ldots,d\}: t_i^{(0)}\in\D \right\},\quad I\cup N=\{1,\ldots,d\}.
\end{eqnarray*}
Without loss of generalization, we assume $I=\{1,\ldots,j\}$ and $N=\{j+1,\ldots,d\}$. We fix all variables except $t_{j}\in I$ and $t_d\in N$ and consider the two variable polynomial
$$p(t_{j},t_d):=\Phi(t_1^{(0)},\ldots,t_{j-1}^{(0)},t_j,t_{j+1}^{(0)},\ldots,t_{d-1}^{(0)},t_d)=\sum_{k=0}^n a_k(t_j) t_d^k,\quad n\in\N,$$
where the coefficients $a_k(t_j)$ are themselves polynomials in one variable $t_j$. We have $p(t^{(0)}_{j},t^{(0)}_d)=0$. Suppose $a_n(t^{(0)}_{j})\neq 0$. Then, by Theorem 3.9.1 of \cite{Tyrtyshnikov}, the polynomial roots $x_i(t),i=1,\ldots,n$ of the equation $p(t,x_i(t))=0$ can be chosen to be continuous in $t$ in a neighborhood of $t_j^{(0)}$. Thus, there exists $t^{(1)}_j,t^{(1)}_d\in\D$ such that $p(t^{(1)}_j,t^{(1)}_d)=0$. Now suppose $a_n(t^{(0)}_{j})= 0$. Then there exists $1\le k<n$ such that $a_k(t^{(0)}_{j})\neq 0$ or otherwise the polynomial $p(t^{(0)}_{j},\cdot)$ is identically zero, which is excluded by the assumptions of the theorem. In the first case we can use the same argument as before. Applying this argument inductively for all $i\in I$ yields the statement of the theorem.
\end{proof}\\

If we consider a polynomial $\Phi:\C^2\to\C$ in two variables, the assumption that $\Phi$ is irreducible implies the second condition in the preceding theorem: if $\Phi(z_1,z_2)=\sum_{k=0}^n a_k(z_1) z_2^k$ is identically zero, the first variable being fixed, then the coefficients $a_k$, which are polynomials themselves, have a common zero and hence the polynomial can be factorized. Thus, by Theorem \ref{zero} for $d=2$ a root in $(\partial\D^2)\backslash \T^2$ of an irreducible polynomial implies a root inside $\D^2$. However, in three variables this is not true, consider e.g.
$$\Phi(z_1,z_2,z_3)=(1-z_1)z_3+(1-z_2)z_3^2.$$
Fixing $(z_1, z_2)=(1,1)$ the polynomial $\Phi(1,1,z_3)$ is identical to zero, but $\Phi$ can not be factorized. Notice that for $\Phi^{-1}\in H^2$ it is necessary and sufficient that $\Phi(\zbf)\neq0$ on $\D^d$ and 
\begin{eqnarray}
\int_{\td}\left|\frac{1}{\Phi(e^{-i\tbf})}\right|^2 d\lambda^d(\tbf)<\infty\label{finit}.
\end{eqnarray}
where the integral does not depend on the values on $(\partial\mathbb{D}^d)\backslash \td$. With the help of Theorem \ref{zero}, we now establish the following necessary condition in the spatial autoregressive model for $d=2$:
\begin{corollar}
Suppose $(Z_\tbf)_{\tbf\in\Z^2}$ is i.i.d. nondeterministic and $R\subset\N_0^2\backslash\{\boldsymbol 0 \}$. A necessary condition for the existence of a causal solution in the autoregressive model \eqref{eq1}, where $\Theta(z_1,z_2)\equiv 1$, is given by
$$\Phi(z_1,z_2)\neq0 \quad\forall (z_1,z_2)\in\overline\D^2.$$\label{2dim}
\end{corollar}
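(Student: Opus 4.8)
The plan is to distill the existence of a causal solution into the single analytic statement $1/\Phi\in H^2$ and then to show that this membership forbids any zero of $\Phi$ on $\overline{\D}^2$. Since here $\Theta\equiv1$, Theorem~\ref{theorem2}(i) applies and gives $\Theta/\Phi=1/\Phi\in H^2$; note also $\Phi(\boldsymbol 0)=1$ because $R\subset\N_0^2\setminus\{\boldsymbol 0\}$, so the case $\Phi\equiv1$ is trivial. By the characterization recorded just before the corollary, $1/\Phi\in H^2$ is equivalent to the conjunction of $\Phi(\zbf)\neq0$ on the open polydisc $\D^2$ and the integrability condition \eqref{finit} on $\T^2$. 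It therefore remains to exclude zeros of $\Phi$ on $\partial\D^2=\overline{\D}^2\setminus\D^2$, and I would argue by contradiction, distinguishing a hypothetical boundary zero $\zbf^0$ according to whether it lies in $(\partial\D^2)\setminus\T^2$ (exactly one coordinate of modulus one) or on the distinguished boundary $\T^2$ (both coordinates of modulus one).

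For a zero $\zbf^0\in(\partial\D^2)\setminus\T^2$ I would pass to an irreducible factorization of $\Phi$ and place $\zbf^0$ in one irreducible factor $p\mid\Phi$, which by $\Phi\neq0$ on $\D^2$ also cannot vanish on $\D^2$. If $p$ involves both variables, then, as the remark following Theorem~\ref{zero} shows, its nondegeneracy hypothesis is automatic in two variables, so Theorem~\ref{zero} upgrades the boundary zero of $p$ to a zero inside $\D^2$, contradicting $\Phi\neq0$ on $\D^2$. The univariate factors must be handled separately: a factor in the strictly-interior coordinate produces an interior zero of $\Phi$ at once, while a factor in the modulus-one coordinate produces a full circle of zeros lying inside $\T^2$, which lands us in the remaining case.

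The genuinely different, and I expect hardest, case is a zero on $\T^2$: here Theorem~\ref{zero} is powerless, since neither coordinate can be perturbed inward by its continuity-of-roots mechanism. Instead I would exploit integrability directly. Writing $\zbf^0=e^{-i\tbf^0}$, the function $g(\tbf):=|\Phi(e^{-i\tbf})|^2$ is smooth and nonnegative on $\T^2$ with $g(\tbf^0)=0$, so $\tbf^0$ is a local minimum; hence $\nabla g(\tbf^0)=0$ and a second-order Taylor estimate yields $g(\tbf)\le C|\tbf-\tbf^0|^2$ near $\tbf^0$. Since $\int_{|\tbf-\tbf^0|<\varepsilon}|\tbf-\tbf^0|^{-2}\,d\lambda^2(\tbf)=\infty$ in two real dimensions, the integral in \eqref{finit} diverges, contradicting $1/\Phi\in H^2$. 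The conceptual crux is precisely this division of labour, namely that zeros off the distinguished boundary are removed by Theorem~\ref{zero} through holomorphy, whereas zeros on $\T^2$ can only be excluded through the $L^2$-integrability of $1/\Phi$, with the reducibility bookkeeping and the univariate factors being the only further points requiring care.
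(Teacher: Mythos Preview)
Your proposal is correct and follows essentially the same route as the paper: reduce to $1/\Phi\in H^2$ via Theorem~\ref{theorem2}, exclude interior zeros by holomorphy, handle zeros on $(\partial\D^2)\setminus\T^2$ by irreducible factorization plus Theorem~\ref{zero} (with the univariate factors treated separately), and exclude zeros on $\T^2$ by showing the $L^2$-integral \eqref{finit} diverges. The only cosmetic difference is in the $\T^2$ step: the paper applies the mean value theorem directly to $\Phi$ to get $|\Phi(e^{-i(\boldsymbol w+\hbf)})|\le C\|\hbf\|$, whereas you reach the same quadratic bound on $|\Phi|^2$ via a second-order Taylor expansion of $g=|\Phi|^2$ around its local minimum; both yield the same divergent minorant $\int\|\hbf\|^{-2}\,d\hbf$.
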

\begin{proof}
By Theorem \ref{theorem2} we know that $\Phi^{-1}(z_1,z_2)\in H^2$ is a necessary and sufficient condition for the existence of a causal solution. This implies directly that $\Phi(z_1,z_2)$ can not possess any root on $\D^2$. Now assume $\Phi(z_1,z_2)=\prod_{i=1}^n \Phi_i(z_1,z_2)$ is a factorization of $\Phi$, where each $\Phi_i(z_1,z_2)$ is irreducible. If one factor $\Phi_i(z_1,z_2)=\Phi_i(z_1)$ only depends on one variable, then it follows
$$\Phi_i(z_1)\neq 0 \quad \forall z_1\in \C: \abs{z_1}\le 1,$$
since, if $\abs{z_1}<1$ is a zero, then $\Phi(z_1,z_2)$ will have a zero on $\D^2$. If $z_1\in\C$ with $\abs{z_1}=1$ is a root, then $\Phi^{-1}(z_1,z_2)$ can not be square integrable. If a factor $\Phi_i$ depends on two variables, then we can apply Theorem \ref{zero} and it follows $\Phi_i(z_1,z_2)\neq0$ on $(\partial\D^2)\backslash \T^2$. It remains to show that  $\Phi(e^{-i\cdot},e^{-i\cdot})\neq0$ on $\T^2$.
Suppose $\boldsymbol w\in\T^2$ is a zero of $\Phi(e^{-i\cdot},e^{-i\cdot})$. Then by the mean value theorem, for an arbitrary norm $|| \cdot ||$ on $\R^2$ and some $C>0$
$$\abs{\Phi(e^{-i(\boldsymbol w +\hbf)})}=\abs{\Phi(e^{-i(\boldsymbol w +\hbf)})-\Phi(e^{-i\boldsymbol w})}\le C ||\hbf||,\quad \hbf\in\T^2.$$
But this implies
\begin{equation}
\int_{\mathbb{T}^2} \frac{d\hbf}{|\Phi(e^{-i(\boldsymbol w +\hbf)})|^2}\ge C^{-1}\int_{\mathbb{T}^2} \frac{d\hbf}{||\hbf||^2}=\infty,\label{added}
\end{equation}
where the latter integral is infinite by simple calculus. This contradicts $\Phi^{-1}\in H^2$. Altogether we showed that no zero on $\overline\D^2$ can exist.
\end{proof}\bigbreak
We go on discussing the relation between zeros on $\T^d$ and the finiteness of \eqref{finit}. The following example from \cite{Rosenblatt} shows that for $d\ge3$ it is possible to have roots on $\td$ and still $\Phi^{-1}(\zbf)\in H^2$ holds.

\begin{example}
Consider for $d=5$ the function 
$$\Phi(\zbf)=1-\frac{1}{5}\sum_{i=1}^5 z_i,\quad \zbf=(z_1,\ldots,z_5)\in\C^5.$$
We will show $\Phi^{-1}(e^{-i\tbf})\in L^2(\T^5)$, and this implies $\Phi^{-1}(\zbf)\in H^2$ by noticing that the only root in $\D^5$ is $\indi=(1,\ldots,1)$. 
Utilizing the Taylor expansion
\begin{eqnarray*}
e^{-ih_j}=1-ih_j-h_j^2+\mathcal{O}(h_j^3),\quad h_j \to 0,\quad j=1,\ldots,5,
\end{eqnarray*}
we estimate 
\begin{eqnarray*}
\abs{\Phi(e^{-i\hbf})}^2=\left|\frac{1}{5}\sum_{j=1}^5 (e^{-ih_j}-1)\right|^2&=&\left|\frac{1}{5}\sum_{j=1}^5\left(ih_j+h_j^2+\mathcal{O}(h^3_j)\right)\right|^2\\
&=&\left|\frac{1}{5}\sum_{j=1}^5 (ih_j+h_j^2)\right|^2\left|1+\frac{\sum_{j=1}^5\mathcal{O}(h^3_j)}{\frac{1}{5}\sum_{j=1}^5 (ih_j+h_j^2)}\right|^2.
\end{eqnarray*}
Observe that
$$\frac{\sum_{j=1}^5\mathcal{O}(h^3_j)}{\frac{1}{5}\sum_{j=1}^5 (ih_j+h_j^2)}\to 0 \quad \text{as} \ \max(|h_1|,\ldots,|h_5|)\to 0.$$
Hence, there are $\epsilon>0$ and $C_1,C_2>0$ such that
\begin{eqnarray*}
\int_{\mathbb{T}^5} \frac{d\hbf}{\abs{\Phi(e^{-i\hbf})}^2}\le C_1+\int_{||\hbf||<\epsilon} \frac{d\hbf}{\abs{\Phi(e^{-i\hbf})}^2}&\le& C_1+C_2\int_{||\hbf||<\epsilon} \frac{d\hbf}{\left|\frac{1}{5}\sum_{j=1}^5 (ih_j+h^2_j)\right|^2}\\
&\le&C_1+25C_2\int_{||\hbf||<\epsilon} \frac{d\hbf}{||\hbf||^4}<\infty,
\end{eqnarray*}\label{contrast}
where $\norm{\cdot}$ denotes the Euclidean norm.
\end{example}

Rosenblatt \cite{Rosenblatt2}, p. 228, states that the reciprocal of the similar polynomial $\Phi(z_1,z_2,z_3)=1-\frac{1}{3}(z_1+z_2+z_3)$ for $d=3$ is also in $H^2$.  \bigbreak While for $d=2$ for autoregressive models $\Phi(e^{-i\tbf})\neq0$ for all $\tbf\in\T^2$ is necessary, this is no longer the case for ARMA models, as the following example shows.
\begin{example}
Consider the two-dimensional ARMA model 
$$Y_{\tbf}-\frac{1}{2}B_1 Y_{\tbf}-\frac{1}{2} B_2 Y_{\tbf}=Z_{\tbf}-B_1Z_{\tbf}-B_2Z_{\tbf}+B_1B_2Z_{\tbf}, \quad \tbf=(t_1,t_2)\in\Z^2.$$
The corresponding moving average and autoregressive polynomials are given by
\begin{eqnarray*}
\Theta(z_1,z_2)&=&(1-z_1)(1-z_2),\\
\Phi(z_1,z_2)&=&1-\frac{1}{2} z_1 -\frac{1}{2} z_2,\quad z_1,z_2\in\C.
\end{eqnarray*}
Notice that both polynomials have a common zero $(z_1,z_2)=(1,1)$ on $\mathbb{T}^2$. We define $H^\infty$ as usually as the vector space of all holomorphic functions $f:\D^d\to\C$, which are bounded on $\D^d$. Then $\Theta(z_1,z_2)/ \Phi(z_1,z_2)\in H^\infty\subset H^2$, as can be seen by following estimation:
\begin{eqnarray*}
\left|\frac{\Theta(z_1,z_2)}{\Phi(z_1,z_2)}\right|^2&=&\left|\frac{2(1-z_1)(1-z_2)}{1-z_1+1-z_2}\right|^2
=4\frac{1}{\abs{\frac{1}{1-z_2}+\frac{1}{1-z_1}}^2}\le 4 \frac{1}{[\Re(\frac{1}{1-z_2}+\frac{1}{1-z_1})]^2}\le 4,\quad \forall z_1,z_2\in\D.
\end{eqnarray*}
\end{example}
In difference to the one-dimensional case the common root of the nominator and denominator can not be canceled out, because the polynomials do not factorize. In some sense, the zero of the nominator covers for the zero of the denominator, resulting in the square integrability.


\section{The spatial autoregressive model of first order}\setcounter{equation}{0}
In the foregoing section we were able to specify necessary conditions and sufficient conditions for the existence of causal solutions, in terms of the zero set of the ARMA polynomials. However, we could not give necessary moment conditions on the noise $(Z_\tbf)_{\tbf\in\Z^d}$. It turns out that in difference to the one-dimensional case, where the asymptotics of the coefficients of the Laurent expansion 
$$\frac {\Theta(z)}{\Phi(z)}=\sum_{k\in\Z} \psi_k z^k,$$
can be easily completely determined in dependence of the zeros of $\Phi(z)$, for $d>1$ it is difficult to determine the asymptotics of the corresponding Laurent or power series expansion. To specify necessary moment conditions, lower bounds on the decay of the coefficients are needed.
However, even though in general it seems difficult to determine lower bounds or the exact asymptotics, for a specific model we are able to determine necessary moment conditions. In this section we want to establish a full characterization of necessary and sufficient conditions for the existence of causal solutions of the autoregressive model of first order with real coefficients in dimension two:\bigbreak
Consider the spatial autoregressive model defined by the equations
\begin{equation}
Y_{t_1,t_2}-\phi_1Y_{t_1-1,t_2}-\phi_2 Y_{t_1,t_2-1}-\phi_3 Y_{t_1-1,t_2-1}=Z_{t_1,t_2}, \quad (t_1,t_2)\in\Z^2,\label{model}
\end{equation}
where $\phi_1,\phi_2,\phi_3\in\R$, $(\phi_1,\phi_2,\phi_3)\neq(0,0,0)$ and $(Z_\tbf)_{\tbf\in\Z^2}$ is an i.i.d. complex-valued random field. We want to establish necessary and sufficient conditions for the existence of a causal solution. To do so, several auxiliary results are needed.
First, we want to determine the coefficients, which solve the to the model \eqref{model} corresponding partial difference equation
\begin{eqnarray}
\psi_{n,k}&=&\phi_1\psi_{n-1,k}+\phi_2\psi_{n,k-1}+\phi_3\psi_{n-1,k-1},\quad n,k\in\N_0,\quad (n,k)\neq (0,0),\label{pde}\\
\psi_{n,0}&=&\phi_1^n,\quad \psi_{0,k}=\phi_2^k \quad \text{for} \quad n,k\in\N_0,\label{pde2}
\end{eqnarray}
where \eqref{pde2} are the boundary conditions and convention $ \psi_{n,k}=0$ for $(n,k)\in\Z^2\backslash\N_0^2$ is used. It is associated with \eqref{model} by the equation
$$\left( \sum_{n,k=0}^\infty \psi_{n,k} z_1^n z_2^k\right)(1-\phi_1 z_1-\phi_2 z_2-\phi_3 z_1z_2)=1.$$
The solution of this partial difference equation is determined in \cite{Fray}:
\begin{lemma}
The unique solution of \eqref{pde} with boundary conditions \eqref{pde2} is given by
\begin{eqnarray}
\psi^{\phi_1,\phi_2,\phi_3}_{n,k}:=\psi_{n,k}&=&\sum_{j=0}^{n} \binom{k}{j}\binom{n+k-j}{k} \phi_1^{n-j} \phi_2^{k-j}\phi_3^{j}\label{formula1}\\
&=&\sum_{j=0}^n \binom{n}{j}\binom{k}{j} \phi_1^{n-j} \phi_2^{k-j} (\phi_1\phi_2+\phi_3)^{j}\label{formula2},\quad n,k\in\N_0.
\end{eqnarray}\label{lllast}
\end{lemma}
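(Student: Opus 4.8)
The plan is to split the statement into two parts: that the array \eqref{formula1} is the unique solution of \eqref{pde}--\eqref{pde2}, and that \eqref{formula1} and \eqref{formula2} are equal. Both parts I would settle through the single generating function $G(z_1,z_2):=(1-\phi_1 z_1-\phi_2 z_2-\phi_3 z_1 z_2)^{-1}$.

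For uniqueness, read \eqref{pde} together with the sign convention $\psi_{n,k}=0$ off $\N_0^2$ and the normalization $\psi_{0,0}=1$: every right-hand side of \eqref{pde} involves only indices of strictly smaller sum $n+k$, so induction on $n+k$ determines all values and there is at most one solution. Moreover, setting $k=0$ in \eqref{pde} collapses to $\psi_{n,0}=\phi_1\psi_{n-1,0}$ and setting $n=0$ collapses to $\psi_{0,k}=\phi_2\psi_{0,k-1}$, so the boundary data \eqref{pde2} are automatically reproduced; thus it suffices to exhibit a closed form satisfying \eqref{pde} and $\psi_{0,0}=1$. Multiplying \eqref{pde} by $z_1^n z_2^k$ and summing shows this is equivalent to the functional equation displayed just before the lemma, i.e. $\sum_{n,k\ge0}\psi_{n,k}z_1^n z_2^k=G(z_1,z_2)$ as formal power series.

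To obtain \eqref{formula1} I would expand $G=\sum_{m\ge0}(\phi_1 z_1+\phi_2 z_2+\phi_3 z_1 z_2)^m$ and apply the multinomial theorem: the term $(\phi_1 z_1)^{n-j}(\phi_2 z_2)^{k-j}(\phi_3 z_1 z_2)^{j}$ is the only source of $z_1^n z_2^k$, and carries the multinomial coefficient $(n+k-j)!/[(n-j)!\,(k-j)!\,j!]$, which equals $\binom{k}{j}\binom{n+k-j}{k}$; summing over $j$ gives \eqref{formula1}. To obtain \eqref{formula2} I would instead factor the denominator as $(1-\phi_1 z_1)(1-\phi_2 z_2)-(\phi_1\phi_2+\phi_3)z_1 z_2$, expand the resulting geometric series in powers of $(\phi_1\phi_2+\phi_3)z_1 z_2$, and read off the coefficient of $z_1^n z_2^k$ using the negative-binomial expansions $(1-\phi_1 z_1)^{-(j+1)}=\sum_a\binom{a+j}{j}\phi_1^a z_1^a$ and likewise in $z_2$; this produces $\binom{n}{j}\binom{k}{j}\phi_1^{n-j}\phi_2^{k-j}(\phi_1\phi_2+\phi_3)^j$ and hence \eqref{formula2}. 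Since both expressions are the coefficient of $z_1^n z_2^k$ in the same function $G$, they coincide and equal $\psi_{n,k}$. Alternatively, the identity \eqref{formula1}$=$\eqref{formula2} can be proved directly: expanding $(\phi_1\phi_2+\phi_3)^j$ in \eqref{formula2} and interchanging sums reduces it to $\sum_{j}\binom{n}{j}\binom{k}{j}\binom{j}{i}=\binom{k}{i}\binom{n+k-i}{k}$, which follows from $\binom{k}{j}\binom{j}{i}=\binom{k}{i}\binom{k-i}{j-i}$ and Vandermonde's convolution.

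The main obstacle is bookkeeping rather than anything conceptual. In the first extraction one must check that the single index $j$ (the exponent of $\phi_3$) simultaneously satisfies the two constraints $a+c=n$ and $b+c=k$ coming from the variables $z_1$ and $z_2$, and that its admissible range $0\le j\le\min(n,k)$ is enforced automatically by the vanishing of $\binom{k}{j}$. In the second extraction one must keep careful track of the shifts in the negative-binomial series so that the powers match $z_1^n z_2^k$ with $a=n-j$, $b=k-j$. These range-and-shift computations are the only delicate points; everything else is formal.
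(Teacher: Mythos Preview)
Your argument is correct. The generating-function approach is clean: uniqueness by induction on $n+k$ is immediate since the recursion \eqref{pde} with the convention $\psi_{n,k}=0$ off $\N_0^2$ determines every value from smaller ones; the two coefficient extractions from $G(z_1,z_2)=(1-\phi_1 z_1-\phi_2 z_2-\phi_3 z_1 z_2)^{-1}$ are carried out correctly (the multinomial coefficient $\frac{(n+k-j)!}{(n-j)!(k-j)!j!}$ indeed equals $\binom{k}{j}\binom{n+k-j}{k}$, and the factorisation $(1-\phi_1 z_1)(1-\phi_2 z_2)-(\phi_1\phi_2+\phi_3)z_1 z_2$ together with the negative-binomial expansion yields $\binom{n}{j}\binom{k}{j}$ as claimed). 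The alternative combinatorial route via $\binom{k}{j}\binom{j}{i}=\binom{k}{i}\binom{k-i}{j-i}$ and Vandermonde is also valid.

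The paper itself does not prove this lemma at all: it simply attributes the result to Fray and Roselle \cite{Fray} and states the two formulas. So your proposal is not merely a different route but a self-contained proof where the paper offers only a citation. What your approach buys is that it makes the connection to the displayed functional equation $\bigl(\sum_{n,k\ge0}\psi_{n,k}z_1^n z_2^k\bigr)\Phi(z_1,z_2)=1$ completely explicit, and it shows transparently why the two representations \eqref{formula1} and \eqref{formula2} must agree, since both arise from the same formal power series. The only minor point worth polishing is to state at the outset that you work in the ring of formal power series, so that no convergence issues enter and the manipulations are purely algebraic.
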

The formulas \eqref{formula1} and \eqref{formula2} are also valid if some of the coefficients $\phi_1,\phi_2,\phi_3$ are equal to zero.
The numbers $\psi^{\phi_1,\phi_2,\phi_3}_{n,k}$ are called {\it weighted Delannoy numbers}. They can be interpreted as the number of weighted paths from $(0,0)$ to $(n,k)$ in the two-dimensional lattice $\N_0^2$, when only moving with steps $(1,0)$, $(0,1)$ and $(1,1)$ is allowed and related weights $\phi_1$, $\phi_2$ and $\phi_3$ respectively. To establish later on necessary moment conditions on the noise $(Z_\tbf)_{\tbf\in\Z^2}$, the asymptotics of $\psi^{\phi_1,\phi_2,\phi_3}_{n,k}$ for $n,k\to\infty$ have to be known. Hetyei \cite{Hetyei} discovered that the weighted Delannoy numbers are related to Jacobi polynomials. For $n\in\N$ the $n$-th {\it Jacobi polynomial $P_n^{(\alpha,\beta)}(x)$ of type $(\alpha,\beta), \alpha,\beta>-1,$} is defined as
$$P_n^{(\alpha,\beta)}(x)=(-2)^{-n} (n!)^{-1} (1-x)^{-\alpha}(1+x)^{-\beta} \frac{d^n}{dx^n} \left((1-x)^{n+\alpha}(1+x)^{n+\beta}\right),\quad x\in(-1,1).$$
The Jacobi polynomial $P_n^{(\alpha,\beta)}(x)$ is indeed a polynomial of degree $n$ on $(-1,1)$ and can therefore be extended to $x\in\R$.
The following relationship is valid, see Theorem 2.8 of \cite{Hetyei}:

\begin{theorem} For $\beta\in\N_0$ and $\phi_3\neq 0$ we have
$$\psi^{\phi_1,\phi_2,\phi_3}_{k+\beta,k}=\phi_1^\beta (-\phi_3)^k  P^{(0,\beta)}_k\left(-2\frac{\phi_1\phi_2}{\phi_3}-1\right),\quad k\in\N_0.$$
\label{prop1}
\end{theorem}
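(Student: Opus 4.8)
The plan is to reduce the identity to a term-by-term comparison between an explicit polynomial expansion of $P_k^{(0,\beta)}$ and the closed form \eqref{formula2} for the weighted Delannoy numbers. The key structural observation is that \eqref{formula2}, rather than \eqref{formula1}, already has the shape of a Jacobi expansion, so once the correct explicit expression for $P_k^{(0,\beta)}$ is written down the whole statement should collapse to matching binomial coefficients and monomials. Choosing \eqref{formula2} at the outset is what makes the comparison immediate.

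First I would obtain an explicit expansion of $P_k^{(0,\beta)}$ directly from the Rodrigues formula stated in the text. Writing $(-2)^{-k}=(-1)^k 2^{-k}$ and applying the Leibniz rule to the $k$-fold derivative of the product $(1-x)^k(1+x)^{k+\beta}$, together with $\frac{d^m}{dx^m}(1-x)^k=(-1)^m \frac{k!}{(k-m)!}(1-x)^{k-m}$ and $\frac{d^s}{dx^s}(1+x)^{k+\beta}=\frac{(k+\beta)!}{(k+\beta-s)!}(1+x)^{k+\beta-s}$, the factor $(1+x)^{-\beta}$ cancels and the prefactors combine into binomial coefficients, yielding
$$P_k^{(0,\beta)}(x)=\sum_{s=0}^{k}\binom{k}{s}\binom{k+\beta}{s}\left(\frac{x-1}{2}\right)^{s}\left(\frac{x+1}{2}\right)^{k-s}.$$
This is the only genuinely analytic step; everything afterwards is algebraic bookkeeping, and it is here that the hypothesis $\phi_3\neq 0$ will eventually be needed.

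Next I would substitute the argument $x=-2\phi_1\phi_2/\phi_3-1$, which gives $\frac{x+1}{2}=-\phi_1\phi_2/\phi_3$ and $\frac{x-1}{2}=-(\phi_1\phi_2+\phi_3)/\phi_3$, so that each summand carries the factor $(-1/\phi_3)^{k}(\phi_1\phi_2+\phi_3)^{s}(\phi_1\phi_2)^{k-s}$. Multiplying through by $\phi_1^\beta(-\phi_3)^k$, the signs combine to $(-1)^{2k}=1$ and the pure powers $\phi_3^{-s}$ and $\phi_3^{-(k-s)}$ from the two halved arguments (total $\phi_3^{-k}$) are exactly cancelled by the $\phi_3^{k}$ in $(-\phi_3)^k$, leaving
$$\phi_1^\beta(-\phi_3)^k P_k^{(0,\beta)}\!\left(-2\frac{\phi_1\phi_2}{\phi_3}-1\right)=\sum_{s=0}^{k}\binom{k}{s}\binom{k+\beta}{s}\phi_1^{k+\beta-s}\phi_2^{k-s}(\phi_1\phi_2+\phi_3)^{s}.$$

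Finally I would compare this with \eqref{formula2} evaluated at $n=k+\beta$: since $\binom{k}{j}=0$ for $j>k$, that sum runs only to $k$ and reads $\sum_{j=0}^{k}\binom{k+\beta}{j}\binom{k}{j}\phi_1^{k+\beta-j}\phi_2^{k-j}(\phi_1\phi_2+\phi_3)^{j}$, which is identical to the expression above under $s=j$. This yields $\psi^{\phi_1,\phi_2,\phi_3}_{k+\beta,k}=\phi_1^\beta(-\phi_3)^k P_k^{(0,\beta)}(-2\phi_1\phi_2/\phi_3-1)$, as claimed. I expect the only mild obstacle to be the Leibniz expansion and the accompanying sign/exponent bookkeeping; no convergence or analytic subtlety arises, since both sides are polynomials in $k$ for fixed data and the identity is a purely combinatorial equality of finite sums.
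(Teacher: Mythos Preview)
Your argument is correct: the standard explicit form
\[
P_k^{(0,\beta)}(x)=\sum_{s=0}^{k}\binom{k}{s}\binom{k+\beta}{s}\Bigl(\tfrac{x-1}{2}\Bigr)^{s}\Bigl(\tfrac{x+1}{2}\Bigr)^{k-s}
\]
does follow from the Rodrigues formula by Leibniz, and after the substitution $x=-2\phi_1\phi_2/\phi_3-1$ and multiplication by $\phi_1^\beta(-\phi_3)^k$ the result matches \eqref{formula2} at $n=k+\beta$ term by term, exactly as you describe. One harmless slip: the closing remark that ``both sides are polynomials in $k$'' is not accurate (they are not polynomial in $k$), but this plays no role since the identity is established separately for each fixed $k$ as an equality of finite sums.

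By way of comparison, the paper does not give its own proof of this statement at all; it simply cites Theorem~2.8 of Hetyei \cite{Hetyei}. Your proposal therefore supplies a self-contained derivation where the paper relies on an external reference. The approach you take---recognising that \eqref{formula2} already has the Jacobi shape and matching against the hypergeometric-type expansion of $P_k^{(0,\beta)}$---is in fact essentially the route taken in \cite{Hetyei}, so you have reconstructed the cited argument.
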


Therefore the asymptotic behaviour of the coefficients depends on the asymptotics of the Jacobi polynomials, which have been studied extensively. Wong and Zhao [22, Theorem 5.1] established an asymptotic expansion for Jacobi polynomials with explicit error term. Accordingly, the asymptotic expansion of order $p\in\N$ can for $N:=n+\frac{\beta+1}{2}$ with $\beta\in\N_0$ be written as
\begin{eqnarray}
\left(\cos\frac{\theta}{2}\right)^{\beta}P^{(0,\beta)}_n\left(\cos\theta\right)= -J_0(N\theta)\sum_{k=0}^{p-1} \frac{c_k(\theta)}{N^k}-J_1(N\theta)\sum_{k=0}^{p-1} \frac{d_k(\theta)}{N^k}+\delta_p(N,\theta),\label{asympexp}
\end{eqnarray}
where $J_\mu$ is the Bessel function of order $\mu$, $c_k(\theta)$ and $d_k(\theta)$ are some coefficients and $\delta_p(N,\theta)$ is the error term. The asymptotic expansion \eqref{asympexp} holds uniformly in $\theta\in(0,\pi)$ and the coefficients can be calculated explicitly. The first coefficients are given by $c_0(\theta)=-\theta^{\frac{1}{2}} (\sin\theta)^{-\frac{1}{2}}$ and $d_0(\theta)=0$. Thus, for $p=1$ the asymptotic expansion equals
\begin{eqnarray}
P^{(0,\beta)}_n\left(\cos\theta\right)=\left(\cos\frac{\theta}{2}\right)^{-\beta}\left(\sqrt{ \frac{\theta}{\sin\theta}} J_0(N\theta)+\delta_1(N,\theta)\right),\label{3dim1}
\end{eqnarray}
where the error term can be estimated by 
\begin{eqnarray}
|\delta_1(N,\theta)|\le\frac{\Lambda}{N} \left(|J_0(N\theta)|+|J_1(N\theta)|\right),\quad \Lambda>0.\label{3dim}
\end{eqnarray}
Here, the constant $\Lambda$ is independent of $\theta$, $n$ and $\beta$.
Now we are prepared to establish the estimation from below of the asymptotics of the coefficients $(\psi_{n,k})_{(n,k)\in\N_0^2}$ to determine moment conditions. Notice that $\phi_1,\phi_2,\phi_3\in(-1,1)$ is necessary for $\Phi^{-1}(z_1,z_2)=(1-\phi_1z_1-\phi_2z_2-\phi_3 z_1 z_2)^{-1}\in H^2$, see Corollary \ref{2dim} and Basu and Reinsel \cite{Basu}, Proposition 1.
\begin{lemma}
Let $\phi_1,\phi_2,\phi_3\in(-1,1)$ and at least two coefficients not equal to zero. Then there is a constant $C>0$ and $x_0>1$ such that 
$$f(x):=\left|\left\{ (n,k)\in\N^2_0 :  \abs{\psi^{\phi_1,\phi_2,\phi_3}_{n,k}}^{-1}\le x \right\}\right|\ge C \log^2(x), \quad x>x_0>1.$$\label{Last}
\end{lemma}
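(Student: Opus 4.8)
The plan is to exhibit, for large $x$, an explicit family of index pairs $(n,k)\in\N_0^2$ at which the coefficient $\psi_{n,k}=\psi^{\phi_1,\phi_2,\phi_3}_{n,k}$ is bounded below by a product of geometric factors (up to a harmless subexponential correction), and then to count these pairs. The recurring mechanism is the elementary counting fact that for $a,b>0$ the number of $(n,k)\in\N_0^2$ with $an+bk\le L$ is at least $\tfrac{L^2}{2ab}(1-o(1))$ as $L\to\infty$, since these are the lattice points of a right triangle with legs $L/a$ and $L/b$. Hence any lower bound of the form $\abs{\psi_{n,k}}\ge g(n,k)\,\mu_1^{\,n}\mu_2^{\,k}$ with $\mu_1,\mu_2\in(0,1)$ and $g$ subexponential, valid on a set of $(n,k)$ of positive lower density, forces $\{(n,k):\abs{\psi_{n,k}}^{-1}\le x\}$ to contain $\ge C\log^2(x)$ points: indeed $\abs{\psi_{n,k}}^{-1}\le x$ is implied by $n\log(1/\mu_1)+k\log(1/\mu_2)\le\tfrac12\log x$ once $x$ is large, the term $-\log g$ being of lower order. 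Thus the proof reduces entirely to producing such a geometric lower bound on a set of positive density, and I would split according to whether the Jacobi argument $x_0:=-2\phi_1\phi_2/\phi_3-1$ from Theorem \ref{prop1} lies outside or inside $(-1,1)$.

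\emph{Easy regime.} If $\phi_3=0$ then formula \eqref{formula1} collapses to the single term $\psi_{n,k}=\binom{n+k}{k}\phi_1^{\,n}\phi_2^{\,k}$ with $\phi_1,\phi_2\neq0$; if $\phi_1=0$ (resp. $\phi_2=0$) it collapses to $\psi_{n,k}=\binom{k}{n}\phi_2^{\,k-n}\phi_3^{\,n}$ (resp. its mirror) with the other two coefficients nonzero. In each of these, and also when all $\phi_i\neq0$ but $x_0\notin(-1,1)$ — equivalently $\phi_1\phi_2$ and $\phi_1\phi_2+\phi_3$ do not have strictly opposite signs — the terms of \eqref{formula2} are all of one sign, so $\abs{\psi_{n,k}}$ dominates its $j=0$ term $\abs{\phi_1}^{n}\abs{\phi_2}^{k}$ (or the single surviving binomial term). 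Since $\binom{\cdot}{\cdot}\ge1$ and $\abs{\phi_i}<1$, this is a geometric lower bound with $g\equiv1$ valid on the whole relevant lattice cone, and the triangle count finishes the case at once.

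\emph{Oscillatory regime.} The genuinely two-dimensional difficulty occurs when $\phi_1,\phi_2,\phi_3$ are all nonzero and $x_0=\cos\theta_0$ with $\theta_0\in(0,\pi)$. Here I would fix $\beta\in\N_0$, write $\psi_{k+\beta,k}=\phi_1^\beta(-\phi_3)^k P_k^{(0,\beta)}(\cos\theta_0)$ by Theorem \ref{prop1}, and insert the asymptotic expansion \eqref{3dim1} together with the error estimate \eqref{3dim}. Since $(\cos(\theta_0/2))^{-\beta}\ge1$ and $\abs{\delta_1(N,\theta_0)}\lesssim N^{-3/2}$ is of smaller order than the main term (of size $\sim N^{-1/2}$) uniformly in $\beta$, this yields, with $N=k+\tfrac{\beta+1}{2}$,
\begin{equation*}
\abs{\psi_{k+\beta,k}}\ \ge\ c_1\,\abs{\phi_1}^{\beta}\abs{\phi_3}^{k}\,\Bigl(\sqrt{\tfrac{\theta_0}{\sin\theta_0}}\,\abs{J_0(N\theta_0)}-O(N^{-3/2})\Bigr)
\end{equation*}
for $k$ large. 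The plan is then to show that, for each fixed $\beta$, the set of $k$ with $\abs{J_0(N\theta_0)}\ge c_2N^{-1/2}$ has a positive lower density $\rho_0$ that is uniform in $\beta$; on this set the parenthesis is $\ge c_3k^{-1/2}$, giving the required geometric lower bound with harmless factor $g(n,k)=k^{-1/2}$. Running the triangle count over the pairs $(\beta,k)$ in this density-$\rho_0$ set, with $n=k+\beta$, produces $f(x)\ge C\log^2x$.

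The main obstacle is exactly this density statement for the Bessel factor. Using $J_0(t)=\sqrt{2/(\pi t)}\,(\cos(t-\tfrac\pi4)+O(1/t))$, the sampling points $t_k=N\theta_0$ advance in steps of $\theta_0$, so the claim amounts to showing that a positive proportion of the arithmetic progression $(N\theta_0)$ avoids neighbourhoods of the zeros of $J_0$. When $\theta_0<2\pi/3$ the intervals on which $\abs{\cos(t-\tfrac\pi4)}\ge\tfrac12$ have length exceeding the step size, so each contains a sampling point and the density is immediate; for the remaining $\theta_0\in[2\pi/3,\pi)$ one invokes equidistribution of $(N\theta_0\bmod\pi)$ when $\theta_0/\pi$ is irrational, and a direct inspection of the finitely many equally spaced orbit points when $\theta_0/\pi$ is rational. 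This uniform-in-$\beta$ density of \emph{good} indices $k$, together with the control of $\delta_1$ across them, is the only delicate point; everything else is the elementary triangle count.
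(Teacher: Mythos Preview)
Your proposal is correct and follows essentially the same architecture as the paper: the same case split (your ``oscillatory regime'' is exactly the paper's case $\phi_3\phi_1^{-1}\phi_2^{-1}<-1$), the same reduction via Theorem~\ref{prop1} to the Jacobi--Bessel asymptotics \eqref{3dim1}--\eqref{3dim}, and the same triangle count at the end. The one place where you work harder than necessary is the density argument for the cosine factor: you split into $\theta_0<2\pi/3$ versus $\theta_0\ge 2\pi/3$ and invoke Weyl equidistribution (plus a separate inspection of the rational case) for the latter, whereas the paper handles all $\theta_0\in(0,\pi)$ at once by the elementary remark that if $3\delta=\min(\theta_0,\pi-\theta_0)$ then two consecutive sample points $N\theta_0-\tfrac\pi4$ and $(N+1)\theta_0-\tfrac\pi4$ cannot both lie within $\delta$ of a zero of the cosine, yielding density $\ge\tfrac12$ uniformly in $\beta$ without any case distinction.
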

\begin{proof}
We consider the three different cases $(i)$\  $\phi_1=0$ or $\phi_2=0$, $(ii)$\ $\phi_1\phi_2\neq0$  and  $\phi_3\phi_1^{-1}\phi_2^{-1}\ge-1$ and $(iii)$\ $\phi_1\phi_2\neq0$  and $\phi_3\phi_1^{-1}\phi_2^{-1}<-1$. Firstly, observe for $\phi_1\phi_2\neq0$ by \eqref{formula2}
$$\psi^{\phi_1,\phi_2,\phi_3}_{n,k}= \phi_1^n \phi_2^k\sum_{j=0}^n \binom{n}{j}\binom{k}{j}  (1+\frac{\phi_3}{\phi_1\phi_2})^{j}.$$
Thus, in the case $(ii)$  \ $\phi_1\phi_2\neq0$  and  $\phi_3\phi_1^{-1}\phi_2^{-1}\in[-1,\infty)$ we have $\abs{\psi^{\phi_1,\phi_2,\phi_3}_{n,k}}\ge \abs{\phi_1}^n \abs{\phi_2}^k$ for all $(n,k)\in\N_0^2$ and therefore for some $x_0>1$ and $C>0$
$$f(x)\ge\left|\left\{ (n,k)\in\N^2_0 : \abs{\phi_1}^{-n}\abs{\phi_2}^{-k}\le x \right\}\right|\ge C \log^2(x),\quad x\ge x_0,$$
where the inequality follows easily. Secondly, assume in the case $(i)$\  $\phi_1=0$ or $\phi_2=0$ without restricting the generality $\phi_1=0$ (the case $\phi_2=0$ follows then by symmetry). By equation \eqref{formula1}, the coefficients $(\psi_{n,k})_{(n,k)\in\N_0^2}$ are given by
\begin{eqnarray*}
\psi_{n,k}=\left\{
\begin{aligned}
\binom{k}{n} \phi_2^{k-n}\phi_3^n&, \quad &k\ge n,\\
0&,\quad &k<n.
\end{aligned}
\right.
\end{eqnarray*}
Therefore we estimate
\begin{eqnarray*}
f(x)&=&\abs{\{ (n,k)\in\N^2_0, k\ge n:  \binom{k}{n}^{-1} \abs{\phi_2}^{-(k-n)}\abs{\phi_3}^{-n}\le x \}}\\
&\ge&\abs{\{ (n,k)\in\N^2_0, k\ge n:   \min(\abs{\phi_2},\abs{\frac{\phi_3}{\phi_2}})^{-(k+n)}\le x \}}\\
&\ge&\frac{1}{2}\abs{\{ (n,k)\in\N^2_0:   \min(\abs{\phi_2},\abs{\frac{\phi_3}{\phi_2}})^{-(k+n)}\le x \}}\ge C \log^2(x),
\end{eqnarray*}
for all  $x>x_0>1$ and some $C>0$.

At last, consider the case $(iii)$\ $\phi_1\phi_2\neq0$  and  $\phi_3\phi_1^{-1}\phi_2^{-1}< -1$ and define $z:=-2\phi_1\phi_2^{-1}\phi_3^{-1}-1\in(-1,1)$ and $\theta\in(0,\pi)$ by the equation $\cos\theta=z$. Define further 
$$A_x^\beta:=\{k\in\N_0: \abs{\psi^{\phi_1,\phi_2,\phi_3}_{k+\beta,k}}^{-1}\le x\}.$$
Then the function $f$ satisfies $f(x)\ge\sum_{\beta=0}^\infty |A_x^\beta|$.

Notice that the equality $\psi^{\phi_1,\phi_2,\phi_3}_{n,k}=\phi_1^n \phi_2^k \psi_{n,k}^{1,1,\frac{\phi_3}{\phi_1\phi_2}}$ holds. Hence by Theorem \ref{prop1} we can express the set $A^\beta_x$ as
\begin{eqnarray*}
A_x^\beta&=&\left\{n\in\N_0:\abs{\phi_1}^{-(n+\beta)} \abs{\phi_2}^{-n} \abs{\psi^{1,1,\frac{\phi_3}{\phi_1\phi_2}}_{n+\beta,n}}^{-1}\le x\right\}\\
&=&\left\{n\in\N_0:\abs{\phi_1}^{-(n+\beta)} \abs{\phi_2}^{-n} \abs{\frac{\phi_3}{\phi_1\phi_2}}^{-n} \left|P^{(0,\beta)}_n\left(-\frac{2\phi_1\phi_2}{\phi_3}-1\right)\right|^{-1}\le x\right\}.
\end{eqnarray*}
In the following we shall estimate
\begin{eqnarray*}
\abs{\frac{\phi_3}{\phi_1\phi_2}}^{n} \left|P^{(0,\beta)}_n\left(-\frac{2\phi_1\phi_2}{\phi_3}-1\right)\right|
\end{eqnarray*}
from below using the asymptotic expansion \eqref{3dim1} with error term \eqref{3dim}. Hence we have
\begin{eqnarray}
\abs{P^{(0,\beta)}_n\left(\cos\theta\right)}&=&\abs{\cos \frac{\theta}{2}}^{-\beta}\left|\sqrt{ \frac{\theta}{\sin\theta}} J_0(N\theta)+\delta_1(N,\theta)\right|,\quad \beta\in\N_0.\label{d1}
\end{eqnarray}
Denote $3\delta=\min(\theta,\pi-\theta)$. Then if $(n+\frac{\beta+1}{2})\theta-\frac{\pi}{4}\in\cup_{k\in\Z}(\frac{\pi}{2}+k\pi-\delta,\frac{\pi}{2}+k\pi+\delta)$, then $(n+1+\frac{\beta+1}{2})\theta-\frac{\pi}{4}\not\in\cup_{k\in\Z}(\frac{\pi}{2}+k\pi-\delta,\frac{\pi}{2}+k\pi+\delta)$. Hence the cosine term $\abs{\cos((n+\frac{(\beta+1)}{2})\theta-\frac{\pi}{4})}$ can be estimated from below by $\abs{\cos(\frac{\pi}{2}+\delta)}$ for at least every second $n\in\N_0$ for fixed $\beta\in\N_0$. Now by the error estimate \eqref{3dim} and the asymptotic formula for the Bessel function (see \cite{Szego}, equation (1.71.7))
\begin{eqnarray*}
J_\mu(z)=\sqrt{\frac{2}{\pi z}} \cos(z-\frac{\mu\pi}{2}-\frac{\pi}{4})+\mathcal{O}(z^{-\frac{3}{2}}), \quad z\to\infty,
\end{eqnarray*}
we can conclude that there are $M\in\N$ and $C_1>0$, such that for every fixed $\beta\in\N_0$ we have for at least every second $n\in\N$ such that $N=n+(\beta+1)/2\ge M$, the estimate
\begin{eqnarray}
\left|\sqrt{ \frac{\theta}{\sin\theta}}J_0(N\theta)+\delta_1(N,\theta)\right|&\ge& \sqrt{\frac{2\theta}{\sin (\theta)\pi N\theta}}\abs{\cos(N\theta-\frac{\pi}{4})}- \frac{\Lambda}{N} \left(|J_0(N\theta)|+|J_1(N\theta)|\right)- C_1 (N\theta)^{-\frac{3}{2}}\notag\\
&\ge& (1-\epsilon')\sqrt{\frac{2\theta}{\sin (\theta)\pi N\theta}}\left|\cos(\frac{\pi}{2}+\delta)\right|, \quad \epsilon'\in(0,1).\label{d2}
\end{eqnarray}
Therefore equations \eqref{d1} and \eqref{d2} yield for some $\epsilon>0$
\begin{eqnarray*}
\abs{\frac{\phi_3}{\phi_1\phi_2}}^{n} \left|P^{(0,\beta)}_n\left(-\frac{2\phi_1\phi_2}{\phi_3}-1\right)\right|\ge \epsilon, 
\end{eqnarray*}
for at least every second $n\ge M$ and for every $\beta\in\N_0$.
Using these estimations it follows that for some $\epsilon>0$
\begin{eqnarray}
2\abs{A_x^\beta}\ge\left|\left\{n\in\N_0, n+\frac{\beta+1}{2}>M:\abs{\phi_1}^{-(n+\beta)} \abs{\phi_2}^{-n} \le x \epsilon \right\}\right|-1. \label{eqq}
\end{eqnarray}
Altogether we conclude (the estimation in \eqref{eqq} is only done for at most all $\beta\in\N_0$ fulfilling $\beta< -\log(x\epsilon)/\log\abs{\phi_1}$, $x>\epsilon^{-1}$, thus finitely many times)
$$f(x) \ge  \frac{1}{2}\left|\left\{(k_1,k_2)\in\N_0^2, k_1\ge k_2:\abs{\phi_1}^{-k_1} \abs{\phi_2}^{-k_2} \le x \epsilon \right\}\right|-CM^2- C'\log(x\epsilon),\quad x>\epsilon^{-1},$$
 for some $C,C'>0$ and the statement of the lemma follows easily.
\end{proof}

\begin{theorem}
Let $(\phi_1,\phi_2,\phi_3)\neq (0,0,0)$. Then the spatial autoregressive model \eqref{model} admits a causal solution $(Y_{t_1,t_2})_{(t_1,t_2)\in\Z^2}$ if and only if
\begin{enumerate}
\item[(i)] the polynomial $\Phi(z_1,z_1)=1-\phi_1 z_1-\phi_2 z_2-\phi_3 z_1 z_2$ has no zero on $\overline\D^2$, \quad and
\item[(ii)] if at least two coefficients of $\phi_1,\phi_2$ and $\phi_3$ are not equal to zero, then $\ew \log^2_+\abs{Z_{\textbf{0}}}<\infty$, otherwise $\ew \log_+\abs{Z_{\textbf{0}}}<\infty$.
\end{enumerate}
If those conditions hold and $\PP_{Z_{\textbf{0}}}$ is symmetric, then the unique symmetric causal solution is given by $Y_{\tbf}=\sum_{\kbf\in\N_0^2} \psi_{\kbf} Z_{\tbf-\kbf}, \tbf\in\Z^2$, where
\begin{equation}
\psi_{\kbf}=\sum_{j=0}^{k_1}\binom{k_2}{j}\binom{k_1+k_2-j}{k_2} \phi_1^{k_1-j}\phi_2^{k_2-j}\phi_3^{j},\quad \kbf=(k_1,k_2)\in\N_0^2.\label{coefflast}
\end{equation}
\end{theorem}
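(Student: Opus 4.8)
The plan is to establish the two implications separately, splitting each according to the number of nonzero coefficients. If exactly one of $\phi_1,\phi_2,\phi_3$ is nonzero, the field decouples into independent one-dimensional AR$(1)$ processes: by \eqref{formula1} one has $\psi_{n,k}=\phi_1^n\indi_{\{k=0\}}$ when only $\phi_1\neq0$, and analogously $\psi_{n,k}=\phi_2^k\indi_{\{n=0\}}$ or $\psi_{n,k}=\phi_3^n\indi_{\{n=k\}}$, so that $(Y_\tbf)$ is built from copies of a scalar AR$(1)$ along a coordinate axis or along the diagonal. For this regime I would invoke the one-dimensional theory of Brockwell and Lindner \cite{Brockwelllindner}, for which $\abs{\phi}<1$ (which is exactly condition (i) here, since e.g. $1-\phi_1 z_1$ vanishes on $\overline\D^2$ iff $\abs{\phi_1}\ge1$) together with $\ew\log_+\abs{Z_{\textbf{0}}}<\infty$ is necessary and sufficient; this accounts for the weaker moment requirement. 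All substantial work concerns the generic regime in which at least two coefficients are nonzero.

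For sufficiency in the generic regime I would argue directly from the earlier results. Condition (i) is precisely the hypothesis $\Phi(\zbf)\neq0$ on $\overline\D^2$ of Theorem \ref{propwrong}$(i)$, and with $d=2$ its moment hypothesis $\ew\log_+^2\abs{Z_{\textbf{0}}}<\infty$ is exactly condition (ii). Hence Theorem \ref{propwrong}$(i)$ produces a causal solution $Y_\tbf=\sum_{\kbf\in\N_0^2}\psi_\kbf Z_{\tbf-\kbf}$, where the $\psi_\kbf$ are the power-series coefficients of $\Theta/\Phi=1/\Phi$; by Lemma \ref{lllast} these are the weighted Delannoy numbers \eqref{coefflast}. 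When $\PP_{Z_{\textbf{0}}}$ is symmetric, uniqueness of the symmetric causal solution is the last assertion of Theorem \ref{theorem2}, which forces it to be this series.

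For necessity, assume a causal solution exists. Condition (i) is immediate and uniform in all cases: Theorem \ref{theorem2}$(i)$ gives $1/\Phi\in H^2$, and Corollary \ref{2dim} upgrades this to $\Phi\neq0$ on $\overline\D^2$; in particular $\phi_1,\phi_2,\phi_3\in(-1,1)$, so Lemma \ref{Last} is applicable. For condition (ii) in the generic regime I would pass to the symmetrization. By Theorem \ref{theorem2}$(ii)$ the symmetrized field obeys $\tilde Y_\tbf=\sum_{\kbf\in\N_0^2}\psi_\kbf\tilde Z_{\tbf-\kbf}$ with convergence almost surely in the rectangular sense, the summands being independent and symmetric. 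Since the $\psi_\kbf$ are real, I apply the (real-valued) machinery to the real and imaginary parts separately; Corollary \ref{thmklesov} then forces series $(A)$, namely
\begin{equation*}
\sum_{\kbf\in\N_0^2}\PP\!\left(\abs{\psi_\kbf\,\Re\tilde Z_{\textbf{0}}}\ge c\right)<\infty\quad\text{and}\quad\sum_{\kbf\in\N_0^2}\PP\!\left(\abs{\psi_\kbf\,\Im\tilde Z_{\textbf{0}}}\ge c\right)<\infty,\qquad c>0.
\end{equation*}

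The crux, and where I expect the main difficulty, is to convert this summability into the moment bound. The device is the layer-cake identity, valid for $W=\abs{\Re\tilde Z_{\textbf{0}}}$ (and likewise for $\Im$),
\begin{equation*}
\sum_{\kbf\in\N_0^2}\PP\!\left(\abs{\psi_\kbf}\,W\ge c\right)=\sum_{\kbf\in\N_0^2}\PP\!\left(W\ge c\,\abs{\psi_\kbf}^{-1}\right)=\ew\bigl[f(W/c)\bigr],
\end{equation*}
where $f(x)=\abs{\{\kbf\in\N_0^2:\abs{\psi_\kbf}^{-1}\le x\}}$ is exactly the counting function of Lemma \ref{Last} (indices with $\psi_\kbf=0$ contribute zero probability and are never counted, so both sides agree). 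Feeding in the lower bound $f(x)\ge C\log^2(x)$ furnished by Lemma \ref{Last} yields $\ew\log_+^2\abs{\Re\tilde Z_{\textbf{0}}}<\infty$ and $\ew\log_+^2\abs{\Im\tilde Z_{\textbf{0}}}<\infty$, hence $\ew\log_+^2\abs{\tilde Z_{\textbf{0}}}<\infty$ by the subadditivity $\log_+^2(a+b)\le C(\log_+^2 a+\log_+^2 b+1)$. Finally I would desymmetrize: the weak lower symmetrization inequality $\PP(\abs{Z_{\textbf{0}}-Z_{\textbf{0}}'}>t)\ge\tfrac12\PP(\abs{Z_{\textbf{0}}-m}>t)$ for a median $m$ transfers the bound to $\ew\log_+^2\abs{Z_{\textbf{0}}}<\infty$. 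The genuine obstacle is therefore the delicate lower bound on the Delannoy coefficients in Lemma \ref{Last}; once it is granted, the identity above reduces the moment condition to bookkeeping.
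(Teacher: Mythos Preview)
Your proposal is correct and follows essentially the same route as the paper: sufficiency via Theorem~\ref{propwrong}$(i)$, necessity of (i) via Corollary~\ref{2dim}, necessity of (ii) via symmetrization, Theorem~\ref{theorem2}, the Klesov series~$(A)$, and Lemma~\ref{Last}, with the one-coefficient case reduced to Brockwell--Lindner. Your layer-cake identity $\sum_{\kbf}\PP(|\psi_\kbf|W\ge c)=\ew[f(W/c)]$ is a cleaner packaging of the same computation the paper carries out by discretizing into shells $\{k\le|\tilde Z_{\textbf 0}|<k+1\}$ and Abel-summing, and your explicit splitting into real and imaginary parts before invoking Corollary~\ref{thmklesov} is a welcome bit of care that the paper leaves implicit.
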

\begin{proof}
Suppose first that at least two coefficients of $\phi_1,\phi_2,\phi_3$ are not equal to zero. Then sufficiency of conditions $(i)$ and $(ii)$ as well as the representation \eqref{coefflast} follow from Theorem \ref{propwrong}, since
$$\Phi^{-1}(z_1,z_2)=\sum_{\kbf\in\N_0^d}  \psi_\kbf \zbf^\kbf,\quad \zbf\in\overline{\D}^2.$$
Conversely, suppose that $(Y_\tbf)_{\tbf\in\Z^2}$ is a causal solution of \eqref{model}. The necessity of condition $(i)$ follows by Corollary \ref{2dim}. It remains to show the necessity of condition $(ii)$. As in the proof of Theorem \ref{theorem2} we define the symmetrizations $(\tilde Y_\tbf)_{\tbf\in\Z^2}$ and $(\tilde Z_\tbf)_{\tbf\in\Z^2}$. Then $(\tilde Y_\tbf)_{\tbf\in\Z^2}$ admits the representation $\tilde Y_{\tbf}=\sum_{\kbf\in\N_0^2} \psi_{\kbf} \tilde Z_{\tbf-\kbf}$, where
$$\psi_{\kbf}=\sum_{j=0}^{k_1}\binom{k_2}{j}\binom{k_1+k_2-j}{k_2} \phi_1^{k_1-j}\phi_2^{k_2-j}\phi_3^{j},\quad \kbf=(k_1,k_2)\in\N_0^2,$$
and the convergence is almost surely in the rectangular sense. By Theorem \ref{theoremklesov2}  and Corollary \ref{thmklesov} we can conclude
\begin{equation}
\sum_{\kbf\in\N^2_0} \PP\left(\left|\psi_\kbf \tilde Z_{\tbf-\kbf}\right|>\epsilon\right)<\infty,\quad \forall \epsilon >0.\label{eq10}
\end{equation} 
But equation \eqref{eq10} implies finite second log-moment, because of the following estimates
\begin{equation}
\ew \log_+^2\abs{\tilde Z_{\textbf 0}}\le\sum_{k=1}^{\infty}  \log^2(k+1) \PP\left(k\le \abs{\tilde Z_{\textbf 0}}< k+1\right)\notag\le\sum_{k=1}^{\infty}  \left(\log(k)+1\right)^2 \PP\left(k\le \abs{\tilde Z_{\textbf 0}}< k+1\right).\label{lasteq}
\end{equation}
The last series is finite, if and only if $\sum_{k=1}^{\infty}  \log^2(k) \PP\left(k\le \abs{\tilde Z_{\textbf 0}}< k+1\right)$ converges.
By Lemma \ref{Last} the latter series can be estimated from above by
\begin{align*}
\sum_{k=1}^{\infty}  \log^2(k) \PP\left(k\le \abs{\tilde Z_{\textbf 0}}< k+1\right)
\le& C  \sum_{k=1}^{\infty} \left|\{\kbf\in\Z^2: \abs{\psi_{\kbf}}^{-1}\le k\}\right| \PP\left(k\le \abs{\tilde Z_{\textbf 0}}< k+1\right)\\
=& C  \sum_{k=1}^{\infty} \left|\{\kbf\in\Z^2: \abs{\psi_{\kbf}}^{-1}\in(k-1,k]\}\right|\PP(\abs{\tilde Z_{\textbf 0}}\ge k)\\
\le&\sum_{\kbf\in\N^2_0} \PP\left(\abs{\psi_\kbf \tilde Z_{\tbf-\kbf}}\ge 1\right)<\infty,\notag
\end{align*}
where the last inequality follows by equation \eqref{eq10}. Notice that the finite second log-moment of the symmetrization $\tilde Z_{\textbf 0}$ implies finiteness of the second log-moment of $Z_{\textbf 0}$.\bigbreak
It remains to consider the case, when two coefficients are equal to zero. In this case the model reduces to a one dimensional model. That is also the case, if $\phi_1=\phi_2=0$ by defining the operator $\tilde B=B_1 B_2$. The model lives only on the diagonal $\tbf=(t,t)\in\Z^2, t\in\Z$. Thus, the results \cite{Brockwelllindner} for the time series model can be applied, which yields necessity and sufficiency of $\ew \log_+\abs{Z_0}<\infty$ and condition $(i)$ in that case.
\end{proof}\\
An equivalent condition for $(i)$ describing the parameter regions is given in Proposition 1 of Basu and Reinsel \cite{Basu}.


\section{Acknowledgement}
We would like to thank Professor Oleg Klesov for a discussion on this topic and for pointing out the reference \cite{Klesov} to us.


\end{document}